\documentclass[12pt]{article}
\usepackage{amsmath,amsthm,amsfonts,latexsym,amsopn,verbatim,amscd,amssymb}

\theoremstyle{plain}
\newtheorem{Thm}{Theorem}[section]
\newtheorem{Lem}[Thm]{Lemma}

\theoremstyle{definition}
\newtheorem{Rem}[Thm]{Remark}

\DeclareMathOperator{\ord}{ord}
\DeclareMathOperator{\Aut}{Aut}

\DeclareMathOperator{\Cl}{C\ell}

\begin{document} 

\title{\textbf{A characterization of certain finite groups of odd order}}  
      
\author{Ashish Kumar Das \ and \ Rajat Kanti Nath}
 
\date{}
\maketitle
\begin{center}\small{\it Department of Mathematics,   North-Eastern Hill University,\\ Permanent Campus, Shillong-793022, Meghalaya, India.\\ Emails:\,  akdasnehu@gmail.com, rajatkantinath@yahoo.com}
\end{center}
\medskip

\begin{abstract} 
The commutativity degree of a finite group  is the probability that two randomly chosen group elements   commute. The main object of this paper is to obtain a characterization for all finite groups of odd order with commutativity degree greater than or equal to $\frac{11}{75}$. 
\end{abstract}

\medskip

\noindent {\small{\textit{Key words:}  finite groups, conjugacy classes, commutativity degree.}}  
 
\noindent {\small{\textit{2010 Mathematics Subject Classification:} 20D60, 20E45, 20P05.}} 

\medskip
\section{Introduction} \label{S:intro}

Throughout this paper $G$ denotes a finite group with commutator subgroup $G'$ and center $Z(G)$. Recall that
the commutativity degree of $G$  is defined as the ratio
\begin{equation}\label{eqprg}
\Pr(G) =\frac{|\{(x,y) \in G \times G : [x,y] = 1 \}|}{|G \times G|},
\end{equation} 
where $[x,y]= xyx^{-1}y^{-1}$ is the commutator of $x,y \in G$.  Clearly, $\Pr(G) = 1$ if and only if $G$ is abelian.  Gustafson \cite{wG73} has shown that if $G$ is non-abelian, then  $\Pr(G) \leq \frac{5}{8}$. Rusin \cite{dR79}, has computed the values of $\Pr(G)$ when $G' \subseteq Z(G)$, and also  when $G' \cap Z(G) = \{1\}$. Moreover, he has       characterized all $G$ with $\Pr(G) > \frac{11}{32}$. Barry et al. \cite{bM06} have shown that $G$ is supersolvable if $\Pr(G) > \frac{11}{75}$ and $|G|$ is odd.

In this paper,  we   determine the value of $\Pr(G)$ and the size of $\frac{G}{Z(G)}$ when $|G'| = p^2$ and  $|G' \cap Z(G)| = p$, where $p$ is a prime such that $\gcd (p-1,|G|) = 1$. Then, using this and few other supplementary results, we characterize all $G$ of odd order  with $\Pr(G) \geq \frac{11}{75}$.

\section{Some auxiliary results}
   It is well-known that $|\Cl_G (x)|$ divides $|G|$ for all $x \in G$. In fact, we have
\begin{equation}\label{eqcgx}
|G:C_G (x)| \, = \, |\Cl_G (x)| \, = \, |[G,x]|
\end{equation}
for all $x \in G$, where $\Cl_G (x)$ and $C_G (x)$ are   the conjugacy class and the centralizer of $x$ in $G$, and $[G,x] = \{[y, x] \, : \,  y \in G \}$.  It may be mentioned here that $[G,x]$ is not necessarily a subgroup of $G$. 

 Let $p$ be the smallest prime divisor of $|G|$. Then, from \eqref{eqcgx}, we get
\begin{equation}\label{eqclx}
p \,\leq\, |\Cl_G (x)| \,\leq\, |G'|\; \Longleftrightarrow\; x \in G - Z(G).
\end{equation}
Also, if  $|\Cl_G (x)| = p$, for some $x \in G$, then $G/C_G (x)$ forms an abelian group, and so,   $G'  \subseteq  C_G (x)$, which means that  $x \in C_G (G')$,   the centralizer of $G'$ in $G$. Thus, for all $x \in G - C_G (G')$, we have  
\begin{equation}\label{eqpclx}
p \,< \, |\Cl_G (x)|.
\end{equation}
 
 Note that $C_G (G') = \mathop{\cap}\limits_{x \in G'} C_G (x) \,   \trianglelefteq G$. So, by \eqref{eqcgx}, $|\Cl_G (x)|$ divides $|\frac{G}{C_G (G')}|$ for all $x \in G'$.  Moreover, $\frac{G}{C_G (G')}$ is abelian if and only if $G'$ is abelian, and in either case  $[C_G (G'), x]$ is a subgroup of $G'$ for all $x \in G$, a fact that can be easily established using the commutator identities 
\begin{equation}\label{comid}
[xy,z] = x[y,z]x^{-1} [x,z],  \textrm{  and  } [x,yz] = [x,y]y[x,z]y^{-1} 
\end{equation}
where $x,y,z \in G$.  Also, using the Jacobi identity  \cite[page 93]{jR84}, we  have
\begin{equation}\label{eqnilcg}
(C_G (G'))' \subseteq Z(G) \subseteq Z(C_G (G')).
\end{equation}
 
\begin{Lem}\label{lemcent} 
Let $p$ be a prime.  Let   $\Aut (G')$ denote the automorphism group of $G'$ and $C_p$  the cyclic group of order $p$.
\begin{enumerate}
\item  $\frac{G}{C_G (G')}$ can be embedded in $\Aut (G')$, and so, $|\Cl_G (x)|$ divides $|\Aut (G')|$ for all $x \in G'$. \label{lca}
\item  If  $\frac{G}{C_G (G')} \cong C_p$, then $\langle  x ,C_G (G') \rangle  = G $, 
$[G, x] = [C_G (G'), x]$ and $|\Cl_G (x)|$ divides $|G'|$ for all $ x  \in G - C_G (G')$. \label{lcb}
\end{enumerate}  
\end{Lem}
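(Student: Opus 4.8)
The plan is to treat the two parts separately, in both cases exploiting the conjugation action of $G$ on its normal subgroup $G'$.

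For part \eqref{lca}, I would write down the homomorphism $\varphi\colon G \to \Aut(G')$ that sends $g$ to the conjugation map $t \mapsto gtg^{-1}$ of $G'$; this is legitimate precisely because $G' \trianglelefteq G$. Its kernel is exactly the set of elements of $G$ fixing $G'$ pointwise, i.e.\ $\ker\varphi = C_G(G')$, so the first isomorphism theorem embeds $\frac{G}{C_G(G')}$ into $\Aut(G')$. It was already observed before the lemma that $C_G(G') \leq C_G(x)$ for every $x \in G'$, hence $|\Cl_G(x)| = |G:C_G(x)|$ divides $\bigl|\frac{G}{C_G(G')}\bigr|$; combining this with Lagrange's theorem applied to the embedding gives $|\Cl_G(x)| \mid |\Aut(G')|$ for all $x \in G'$.

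For part \eqref{lcb}, assume $\frac{G}{C_G(G')} \cong C_p$ and fix $x \in G - C_G(G')$. Then $xC_G(G')$ is a nonidentity element of a group of prime order, so it generates $\frac{G}{C_G(G')}$; this immediately yields $\langle x, C_G(G')\rangle = G$ and lets me write any $g \in G$ as $g = x^k c$ with $k \in \mathbb{Z}$ and $c \in C_G(G')$. Using the first identity in \eqref{comid} together with $[x^k,x] = 1$ gives $[g,x] = [x^k c, x] = x^k [c,x] x^{-k}$. From \eqref{comid} one also checks the identity $x[c,x]x^{-1} = [xcx^{-1},x]$; applying it repeatedly gives $x^k[c,x]x^{-k} = [x^k c x^{-k}, x]$, and $x^k c x^{-k} \in C_G(G')$ because $C_G(G') \trianglelefteq G$, so $[g,x] \in [C_G(G'),x]$. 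Hence $[G,x] \subseteq [C_G(G'),x]$; the reverse inclusion is trivial, so $[G,x] = [C_G(G'),x]$. Since $\frac{G}{C_G(G')}$ is abelian, $[C_G(G'),x]$ is a subgroup of $G'$ (as recorded just before the lemma), so by \eqref{eqcgx} we get $|\Cl_G(x)| = |[G,x]| = |[C_G(G'),x]|$, which divides $|G'|$ by Lagrange's theorem.

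The only step demanding any care is the commutator identity $x[c,x]x^{-1} = [xcx^{-1},x]$, which is what lets me push conjugates of commutators back into $[C_G(G'),x]$; I expect this to be the main — though entirely routine — obstacle, since everything else is a direct invocation of facts already established in Section~\ref{S:intro} and at the start of this section.
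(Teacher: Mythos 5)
Your proof is correct and follows essentially the same route as the paper: part \ref{lca} is the standard $N/C$-theorem argument (which the paper simply cites from Rotman), and part \ref{lcb} fills in exactly the computation the paper gestures at, namely using the identities \eqref{comid} to show $[G,x]=[C_G(G'),x]$, the previously recorded fact that $[C_G(G'),x]$ is a subgroup of $G'$, and \eqref{eqcgx} plus Lagrange to conclude divisibility. The key identity $x[c,x]x^{-1}=[xcx^{-1},x]$ that you single out does check out, so there is no gap.
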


\begin{proof}
Part \ref{lca} follows from \cite[theorem 7.1(i), page 130]{jR84},  and part \ref{lcb} from \eqref{eqcgx} and  \eqref{comid},   noting  that  $[C_G (G'), x]$ is a subgroup of $G'$.
\end{proof}

It is easy to see, from \eqref{eqprg}, that if $H$ and $K$ are two finite groups then  
\begin{equation}\label{eqprod}
\Pr(H\times K) = \Pr(H)\Pr(K).
\end{equation}
In view of this, the following result, which generalizes \cite[lemma 3.8]{bM06}, simplifies our task considerably.  
\begin{Lem}\label{nilcla3}
Let $p$ be a prime such that $\gcd (p-1,|G|) = 1$. If   $|G'|= p^2$ and $|G' \cap Z(G)| = p$,   then $G$ is nilpotent of class $3$; in particular,  $G \cong P \times A$, where $A$ is an abelian group and $P$ is a $p$-group such that $|P'|= p^2$ and $|P' \cap Z(P)| = p$.
\end{Lem}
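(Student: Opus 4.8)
The plan is to factor out the central subgroup $N := G' \cap Z(G)$ and let the coprimality hypothesis force nilpotency. Note first that $N$ is a normal subgroup of $G$ of order $p$ contained in $Z(G)$, and that $N \le G'$, so the derived subgroup of $\overline{G} := G/N$ is $(\overline{G})' = G'/N$, a group of order $p$. Conjugation gives an action of $\overline{G}$ on its normal subgroup $(\overline{G})'$, i.e.\ a homomorphism $\overline{G} \to \Aut((\overline{G})')$ (this is Lemma~\ref{lemcent}\ref{lca} applied to $\overline{G}$; and in any case $|\Aut((\overline{G})')| = p-1$). Its image is a quotient of $\overline{G}$, hence has order dividing $|G/N| = |G|/p$, which divides $|G|$; since $\gcd(p-1,|G|) = 1$, the image is trivial. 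Thus $(\overline{G})' \le Z(\overline{G})$, i.e.\ $\overline{G}$ is nilpotent of class at most $2$.

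Next I would lift this to $G$. From $\gamma_3(\overline{G}) = 1$ we get $\gamma_3(G) \le N \le Z(G)$, so $\gamma_4(G) = [\gamma_3(G), G] = 1$ and $G$ is nilpotent of class at most $3$. Being finite and nilpotent, $G$ is the internal direct product of its Sylow subgroups, so $G = P \times A$ with $P$ the Sylow $p$-subgroup and $A$ the (normal) Hall $p'$-subgroup. Then $G' = P' \times A'$; since $A' \le A$ has order prime to $p$ while $|G'| = p^2$, necessarily $A' = 1$, so $A$ is abelian and $G' = P'$, giving $|P'| = p^2$. Moreover $Z(G) = Z(P) \times A$, whence $G' \cap Z(G) = (P' \cap Z(P)) \times 1$ and therefore $|P' \cap Z(P)| = p$. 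Finally, if $G$ had class at most $2$ then $G' \le Z(G)$ and $|G' \cap Z(G)| = p^2 \ne p$, a contradiction; so $G$, and with it $P$, has class exactly $3$.

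The argument is short, and the only genuine decision is to pass to $G/N$ rather than to work with $G$ and $\Aut(G')$ directly. In the direct approach the case where $G'$ is elementary abelian of order $p^2$ brings in the automorphism group of order $p(p-1)^2(p+1)$, whose factor $p+1$ need not be prime to $|G|$, so one would need the supplementary observation that $G/C_G(G')$ fixes $N$ pointwise (being central, $N$ is fixed by all of $G$) in order still to force $|G/C_G(G')| = p$. Killing $N$ first collapses $G'$ to order $p$ and reduces everything to the automorphism group of a cyclic group of order $p$, where $\gcd(p-1,|G|)=1$ bites immediately; so I expect that the only place needing care is the nilpotency-class bookkeeping --- the passage $\gamma_3(\overline{G}) = 1 \Rightarrow \gamma_4(G) = 1$ and the ``class exactly $3$'' at the end.
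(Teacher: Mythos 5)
Your proof is correct and takes essentially the same route as the paper: the paper quotients by $Z(G)$ rather than by $N = G'\cap Z(G)$, but in both cases the computation $|G'/(G'\cap Z(G))|=p$ shows the quotient has derived subgroup of order $p$, and then Lemma \ref{lemcent}\ref{lca} together with $\gcd(p-1,|G|)=1$ forces that derived subgroup to be central, giving nilpotency class at most $3$ and hence the Sylow decomposition. Your additional bookkeeping (checking $A'=1$, $|P'|=p^2$, $|P'\cap Z(P)|=p$, and that the class is exactly $3$) is all sound and merely fills in what the paper dismisses with ``Hence, the lemma follows.''
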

\begin{proof}
We have
\[
\left|\left(\frac{G}{Z(G)}\right)'\right| = \left|\frac{G'Z(G)}{Z(G)}\right| = \left|\frac{G'}{G' \cap Z(G)}\right| = p.
\]
Therefore, it follows from Lemma \ref{lemcent}\ref{lca} that $G/Z(G)$ is nilpotent of class $2$,  and so, $G$ is nilpotent of class $3$; in particular, $G$ is the direct product of its Sylow subgroups. Hence, the lemma follows.
\end{proof}

Given $H \subseteq G$, consider the set $H^* = \{ x \in G : [G,x] \subseteq H \}$. Then,    we have

\begin{Lem}\label{hstar} 
{\rm\cite[section I]{dR79}}  If $H$, $H_1$, $H_2 \trianglelefteq  G$, then
\begin{enumerate}
\item $(G' \cap H)^* = H^*$, $\{1\}^* = Z(G)$, and $(G')^* = G$,\label{hsa}
\item $(H_1 \cap H_2)^* = H_1^* \cap H_2^*$, and $H_1^*   H_2^* \subseteq (H_1  H_2)^*$,  \label{hsb}
\item  $H_1 \subseteq H_2 \; \Longrightarrow \; H_1^* \subseteq H_2^*$, \label{hsc}
\item $H \trianglelefteq  H^* \trianglelefteq  G$, and $Z(G/H) = H^*/H$,\label{hsd}
\item $G/H^*$  is never a nontrivial cyclic group.\label{hse}
\end{enumerate}
\end{Lem}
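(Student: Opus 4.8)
The plan is to reduce the entire lemma to one structural fact about $H^*$ when $H \trianglelefteq G$, namely that $H^*$ is exactly the preimage in $G$ of the centre $Z(G/H)$. I would establish this first: for $x \in G$, the set $[G,x] = \{[y,x] : y \in G\}$ lies in the subgroup $H$ precisely when $[x,y] = [y,x]^{-1} \in H$ for every $y \in G$, which says exactly that the coset $xH$ commutes with every coset $yH$ in $G/H$. Hence $H^*/H = Z(G/H)$, and since the right-hand side is a normal subgroup of $G/H$ containing the identity coset, we immediately get $H \subseteq H^*$ with $H \trianglelefteq H^* \trianglelefteq G$ — that is, part (d). I would prove (d) at the outset and invoke it freely thereafter.

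With (d) in hand the remaining parts are short verifications. For (a): every commutator lies in $G'$, so $[G,x] \subseteq H$ if and only if $[G,x] \subseteq G' \cap H$, giving $(G' \cap H)^* = H^*$; putting $H = \{1\}$ gives $\{1\}^* = \{x : [y,x] = 1\ \forall y \in G\} = Z(G)$, and since $[G,x] \subseteq G'$ always holds, $(G')^* = G$. For (b): $(H_1 \cap H_2)^* = H_1^* \cap H_2^*$ is immediate from $[G,x] \subseteq H_1 \cap H_2 \iff [G,x] \subseteq H_1$ and $[G,x] \subseteq H_2$; for the inclusion $H_1^* H_2^* \subseteq (H_1 H_2)^*$ (both sides normal subgroups of $G$ by (d)) I would take $x \in H_1^*$, $y \in H_2^*$, and use the commutator identity $[z, xy] = [z,x]\,x[z,y]x^{-1}$ from \eqref{comid}: here $[z,x] \in H_1$ and $x[z,y]x^{-1} \in H_2$ since $[z,y] \in H_2 \trianglelefteq G$, so $[z,xy] \in H_1 H_2$ for every $z \in G$. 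Part (c) follows from (b) by writing $H_1 = H_1 \cap H_2$, whence $H_1^* = H_1^* \cap H_2^* \subseteq H_2^*$ (or directly from $[G,x] \subseteq H_1 \subseteq H_2$).

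For (e), I would pass to $\overline{G} = G/H$: by (d), $\overline{G}/Z(\overline{G}) = (G/H)/(H^*/H) \cong G/H^*$. If $G/H^*$ were cyclic, then the standard fact that $G/Z(G)$ being cyclic forces $G$ to be abelian, applied to $\overline{G}$, gives $Z(\overline{G}) = \overline{G}$, i.e.\ $H^* = G$, so $G/H^*$ is trivial; thus $G/H^*$ can never be a nontrivial cyclic group.

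I do not expect a genuine obstacle: each assertion is a one-line consequence of the commutator identities in \eqref{comid} and the identification $H^*/H = Z(G/H)$. The only point requiring any care is the inclusion in (b), where one must select the correct form of the commutator identity and use the normality of $H_2$ to pull the conjugate $x[z,y]x^{-1}$ back inside $H_2$.
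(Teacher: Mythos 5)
Your proof is correct, and since the paper states this lemma without proof (it is quoted from Rusin's paper, cited as \cite[section I]{dR79}), there is no in-paper argument to compare against. Your route --- identifying $H^*$ as the preimage of $Z(G/H)$, deducing (d) first, and reading off the other parts, with the one nontrivial step being the use of the identity $[z,xy]=[z,x]\,x[z,y]x^{-1}$ together with normality of $H_2$ for the inclusion in (b) --- is the standard argument and fills the gap completely.
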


The following result is well-known (see \cite{kJ69}), and also can be derived easily from the degree equation   \cite[corollary 2.7]{iM94}.
\begin{Lem}\label{prgp}
Let $p$ be the smallest prime  divisor of $|G|$. Then
\[
 \Pr(G)\leq \frac{1}{|G'|}\left(1+\frac{|G'|-1}{p^2}\right)
\]
with equality if and only if  each non-linear irreducible complex character of $G$ is of degree $p$.
\end{Lem}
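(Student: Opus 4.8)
The plan is to translate the probabilistic quantity $\Pr(G)$ into a count of conjugacy classes, and then bound that count using the degree equation. First I would recall the standard identity: the number of ordered commuting pairs $(x,y)\in G\times G$ equals $\sum_{x\in G}|C_G(x)|$, and by the orbit-counting argument this sum is $k(G)\,|G|$, where $k(G)$ denotes the number of conjugacy classes of $G$. Hence $\Pr(G)=k(G)/|G|$, and since $k(G)=|\Irr(G)|$, it suffices to bound the number of irreducible complex characters of $G$.

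Next I would split $\Irr(G)$ into linear and non-linear characters. The linear characters of $G$ are exactly the irreducible characters of the abelian quotient $G/G'$, so there are precisely $|G/G'|=|G|/|G'|$ of them. Let $f$ be the number of non-linear irreducible characters. Each such $\chi$ has $\chi(1)>1$, and since $\chi(1)$ divides $|G|$, it follows that $\chi(1)\ge p$, where $p$ is the smallest prime divisor of $|G|$.

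The key step is the degree equation $\sum_{\chi\in\Irr(G)}\chi(1)^2=|G|$ (Isaacs, Corollary 2.7). Separating off the contribution of the linear characters gives
\[
|G| \;=\; \frac{|G|}{|G'|} + \sum_{\chi(1)>1}\chi(1)^2 \;\ge\; \frac{|G|}{|G'|} + f\,p^2,
\]
so that $f \le \dfrac{|G|}{p^2}\Bigl(1-\dfrac{1}{|G'|}\Bigr) = \dfrac{|G|\,(|G'|-1)}{p^2\,|G'|}$. Adding back the $|G|/|G'|$ linear characters and dividing by $|G|$ yields
\[
\Pr(G) \;=\; \frac{1}{|G|}\Bigl(\frac{|G|}{|G'|}+f\Bigr) \;\le\; \frac{1}{|G'|} + \frac{|G'|-1}{p^2\,|G'|} \;=\; \frac{1}{|G'|}\Bigl(1+\frac{|G'|-1}{p^2}\Bigr),
\]
which is the asserted inequality.

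For the equality clause, I would simply track the single inequality $\sum_{\chi(1)>1}\chi(1)^2 \ge f\,p^2$: since each term satisfies $\chi(1)^2 \ge p^2$, equality in the chain holds if and only if $\chi(1)=p$ for every non-linear irreducible character $\chi$ of $G$. I do not anticipate any genuine obstacle here — the only ingredients are the identity $\Pr(G)=k(G)/|G|$, the divisibility $\chi(1)\mid|G|$, and the degree equation, all standard — so the only point requiring a little care is the faithful bookkeeping of the equality condition through that one estimate.
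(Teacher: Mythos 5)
Your proof is correct and follows exactly the route the paper indicates: the paper does not write out a proof but states that the lemma ``can be derived easily from the degree equation,'' and your argument via $\Pr(G)=k(G)/|G|$, the count $|G:G'|$ of linear characters, the bound $\chi(1)\ge p$ from $\chi(1)\mid |G|$, and the degree equation is precisely that derivation, with the equality condition tracked correctly.
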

  
We conclude this section reformulating a few more known results. 
\begin{Lem}\label{prgnil}
  {\rm\cite[section II]{dR79}}  Let $p$ be a prime and $G$ be a $p$-group with $G' \subseteq Z(G)$. Then
\[
\Pr(G)=\frac{1}{|G'|}\left(1+
       \sum\limits_{\mathop{{K \lneq G'}}\limits_{G'/K \; cyclic}}   \frac{(p-1)|G':K|}{p |G:K^*| } \right),  
\]
where $\frac{G}{K^*} \cong \prod (C_{p^{n_i}} \times C_{p^{n_i}} )$    with  $p \leq p^{n_i} \leq p^{n_1} = p^k = |G':K|$.
\end{Lem}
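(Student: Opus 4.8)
The plan is to rewrite $\Pr(G)$ as an average of reciprocals of conjugacy class sizes and then to transport this information to the character group $\Irr(G')$ of the abelian, central subgroup $G'$, exploiting that $\ker\lambda\trianglelefteq G$ for every $\lambda\in\Irr(G')$ and that the operation $H\mapsto H^*$ of Lemma \ref{hstar} describes exactly the fibres that occur. Concretely, from \eqref{eqcgx} we have $\Pr(G)=\frac{1}{|G|^2}\sum_{x\in G}|C_G(x)|=\frac{1}{|G|}\sum_{x\in G}\frac{1}{|[G,x]|}$, and since $G'\subseteq Z(G)$ (so that $C_G(G')=G$) each $[G,x]$ is an actual subgroup of $G'$. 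Now $[G,x]\subseteq\ker\lambda$ if and only if $\lambda([y,x])=1$ for all $y\in G$, i.e.\ $x\in(\ker\lambda)^*$; and since $G'$ is abelian, $|G':[G,x]|$ equals the number of $\lambda\in\Irr(G')$ that are trivial on $[G,x]$. Hence $\frac{1}{|[G,x]|}=\frac{1}{|G'|}\,|\{\lambda\in\Irr(G'):[G,x]\subseteq\ker\lambda\}|$, and substituting this and interchanging the two summations yields
\[
\Pr(G)=\frac{1}{|G|\,|G'|}\sum_{\lambda\in\Irr(G')}\bigl|(\ker\lambda)^*\bigr|.
\]

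The next step is to isolate the trivial character and organize the rest by kernel. For $\lambda=1_{G'}$ we have $\ker\lambda=G'$ and $(G')^*=G$ by Lemma \ref{hstar}\ref{hsa}, contributing exactly $\frac{1}{|G'|}$ to $\Pr(G)$. For $\lambda\ne1_{G'}$, the kernel $K:=\ker\lambda$ is a proper subgroup of $G'$ with $G'/K$ cyclic, hence $G'/K\cong C_{p^j}$ with $p^j=|G':K|$ because $G'$ is a $p$-group; conversely, for each such $K$ the characters of $G'$ with kernel precisely $K$ are the $p^{j-1}(p-1)$ faithful characters of $C_{p^j}$, and $(\ker\lambda)^*=K^*$ depends on $\lambda$ only through $K$. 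Grouping the nontrivial $\lambda$ by their kernels,
\[
\Pr(G)=\frac{1}{|G'|}\left(1+\frac{1}{|G|}\sum_{\substack{K\lneq G'\\ G'/K\ \mathrm{cyclic}}}\frac{(p-1)\,|G':K|}{p}\,|K^*|\right),
\]
and replacing $\frac{|K^*|}{|G|}$ by $\frac{1}{|G:K^*|}$ gives the stated formula.

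It remains to determine the isomorphism type of $G/K^*$ for a $K$ occurring in the sum; put $|G':K|=p^k$ and $\bar G=G/K$. Then $\bar G$ has nilpotency class at most $2$, its commutator subgroup $\bar{G'}=G'/K$ is cyclic of order $p^k$ and contained in $Z(\bar G)$, and $Z(\bar G)=K^*/K$ by Lemma \ref{hstar}\ref{hsd}, so $G/K^*\cong\bar G/Z(\bar G)$. The commutator map induces on $\bar G/Z(\bar G)$ a biadditive, alternating pairing $\beta$ with values in $\bar{G'}\cong C_{p^k}$; this pairing is nondegenerate, since an element of $\bar G/Z(\bar G)$ pairing trivially with everything would come from $Z(\bar G)$, and its values generate $\bar{G'}$ because $[\bar G,\bar G]=\bar{G'}$. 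By the classification of finite abelian $p$-groups equipped with a nondegenerate alternating pairing, $\bar G/Z(\bar G)$ decomposes as an orthogonal direct sum of hyperbolic pairs $C_{p^{n_i}}\times C_{p^{n_i}}$; nondegeneracy on the $i$-th pair forces the pairing of its two generators to be an element of order $p^{n_i}$ in $C_{p^k}$, so $n_i\le k$, while the generation statement forces $\max_i n_i=k$. After relabelling this is precisely $G/K^*\cong\prod(C_{p^{n_i}}\times C_{p^{n_i}})$ with $p\le p^{n_i}\le p^{n_1}=p^k=|G':K|$.

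The character bookkeeping and the applications of Lemma \ref{hstar} are routine; the substantive point is the last step. One either invokes the structure theorem for nondegenerate alternating forms on finite abelian $p$-groups and checks that it applies here — in particular that $\beta$ takes values in, and has image generating, the cyclic group $C_{p^k}$, which is exactly what pins down $n_1=k$ — or re-derives the decomposition of $\bar G/Z(\bar G)$ into hyperbolic pairs directly, peeling off one pair at a time. I expect this decomposition, and the accompanying control of the exponents $n_i$, to be the main obstacle.
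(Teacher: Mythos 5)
The paper offers no proof of this lemma; it is quoted verbatim from Rusin \cite[section II]{dR79}, so there is nothing in the paper itself to compare against. Your argument is correct and is in substance a reconstruction of Rusin's own: the identity $\Pr(G)=\frac{1}{|G||G'|}\sum_{\lambda\in\Irr(G')}|(\ker\lambda)^*|$, the isolation of the trivial character, the count of $\tfrac{(p-1)|G':K|}{p}$ characters with kernel exactly $K$, and the identification $G/K^*\cong \bar G/Z(\bar G)$ via Lemma \ref{hstar}\ref{hsd} are all exactly the right steps, and each is justified. The one genuinely nontrivial input, which you correctly flag, is the hyperbolic decomposition of $\bar G/Z(\bar G)$ under the commutator pairing valued in $\bar G'\cong C_{p^k}$; this is not quite off-the-shelf but follows by the standard peeling argument (nondegeneracy forces $\exp(\bar G/Z(\bar G))\le p^k$, a pair $x,y$ with $\beta(x,y)$ of order $p^k$ spans a direct summand $C_{p^k}\times C_{p^k}$ on which the form is unimodular, and one splits off its orthogonal complement and inducts), which also pins down $n_1=k$ and $n_i\le k$ as you state. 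So the proposal is complete modulo writing out that routine induction.
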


\begin{Lem}\label{prgrus}
  {\rm\cite[section III]{dR79}}  Let $p$ be a prime. If $|G'|=p$ and $G'\cap Z(G)=\{1\}$, then  $\frac{G}{Z(G)}$ is a non-abelian group of order $pn$ and   
\[
  \Pr(G)\; =\; \frac{n^2+p-1}{pn^2},
\]
where $n > 1$ is a   divisor  of $p-1$.
\end{Lem}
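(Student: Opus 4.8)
The plan is to analyse the normal subgroup $C:=C_G(G')$, pin down $|G:Z(G)|$, and then simply count conjugacy classes. Since $|G'|=p$ the group $G'$ is cyclic of order $p$ and $\Aut(G')\cong C_{p-1}$, so Lemma~\ref{lemcent}\ref{lca} shows $G/C$ is cyclic of order $n$ for some divisor $n$ of $p-1$; moreover $n>1$, because $G'\cap Z(G)=\{1\}\neq G'$ forces $G'\not\subseteq Z(G)$, i.e.\ $C\neq G$. Next, $C'\subseteq G'$ trivially while $C'\subseteq Z(G)$ by \eqref{eqnilcg}, so $C'\subseteq G'\cap Z(G)=\{1\}$; thus $C$ is abelian, and $G'\subseteq C$.

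I would then fix $t\in G$ whose image generates $G/C\cong C_n$, so that $G=\langle t,C\rangle$. Using the commutator identities \eqref{comid} together with the facts that $C$ is abelian and $G'\subseteq C$, one checks that $x\mapsto[x,t]$ is a homomorphism $C\to G'$ with kernel $C\cap C_G(t)=C_G(G)=Z(G)$. Its image is a subgroup of $G'\cong C_p$, and it is nontrivial (otherwise $t$ centralises $C$ and $G$ is abelian, contradicting $|G'|=p$), hence equals $G'$. Therefore $C/Z(G)\cong G'\cong C_p$, so $|G:Z(G)|=|G:C|\,|C:Z(G)|=pn$; and since $(G/Z(G))'=G'Z(G)/Z(G)\cong G'/(G'\cap Z(G))\cong C_p$ is nontrivial, $G/Z(G)$ is non-abelian of order $pn$, as claimed.

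For $\Pr(G)=k(G)/|G|$ (with $k(G)$ the number of conjugacy classes) I would determine every class size, using $|\Cl_G(x)|=|[G,x]|\le|G'|=p$ from \eqref{eqcgx}. If $x\notin C$ then $x$ fails to centralise $G'$, so $C_G(x)\cap G'=\{1\}$ (as $|G'|=p$), whence $p=|G'C_G(x):C_G(x)|$ divides $|G:C_G(x)|\le p$ and $|\Cl_G(x)|=p$. If $x\in C\setminus Z(G)$, write an arbitrary $g\in G$ as $g=t^ih$ with $h\in C$; then $[h,x]=1$ gives $[g,x]=[t^i,x]$, so $[G,x]=\{[t^i,x]:i\in\mathbb Z\}$, and the recursion $[t^{i+1},x]={}^{t}[t^i,x]\cdot[t,x]$ from \eqref{comid} — in which conjugation by $t$ acts on $G'\cong\mathbb F_p$ as multiplication by a primitive $n$th root of unity, while $[t,x]\neq 1$ since $x\notin C_G(t)$ — shows this set has exactly $n$ elements, so $|\Cl_G(x)|=n$. (Alternatively one could observe that $[g,x]\in Z(G)$ forces $[g,x]\in G'\cap Z(G)=\{1\}$, hence $C_{G/Z(G)}(\bar x)=C_G(x)/Z(G)$ and $k(G)=|Z(G)|\,k(G/Z(G))$, and then read the class sizes off the Frobenius group $G/Z(G)\cong C_p\rtimes C_n$.)

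Collecting the pieces: $Z(G)$ gives $|Z(G)|$ classes of size $1$; the $(p-1)|Z(G)|$ elements of $C\setminus Z(G)$ give $(p-1)|Z(G)|/n$ classes; and the $p(n-1)|Z(G)|$ elements of $G\setminus C$ give $(n-1)|Z(G)|$ classes. Hence $k(G)=|Z(G)|\bigl(1+\tfrac{p-1}{n}+n-1\bigr)=|Z(G)|\cdot\tfrac{n^2+p-1}{n}$, and dividing by $|G|=pn\,|Z(G)|$ yields $\Pr(G)=\tfrac{n^2+p-1}{pn^2}$. The main obstacle I anticipate is the \emph{exact} assertions: that $x\mapsto[x,t]$ is onto (giving $|G:Z(G)|=pn$ rather than a proper divisor thereof) and that each $x\in C\setminus Z(G)$ has class size exactly $n$; once these two facts are secured, the rest is bookkeeping.
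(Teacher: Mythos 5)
Your proof is correct. There is no in-paper proof to compare it against: the paper states this lemma as a known result quoted from \cite[section III]{dR79} and gives no argument of its own. Your reconstruction is sound and complete: the decisive points --- that $C:=C_G(G')$ is abelian because $C'\subseteq G'\cap Z(G)=\{1\}$ by \eqref{eqnilcg}, that $x\mapsto[x,t]$ is a surjective homomorphism $C\to G'$ with kernel $Z(G)$ (forcing $|G:Z(G)|=pn$ exactly, not a proper divisor), and the exact class sizes ($p$ for $x\notin C$ via $C_G(x)\cap G'=\{1\}$, and $n$ for $x\in C\setminus Z(G)$ via the geometric-series recursion for $[t^i,x]$ with $\lambda$ a primitive $n$th root of unity mod $p$) --- are each justified, and the final class count $k(G)=|Z(G)|\left(1+\frac{p-1}{n}+n-1\right)$ divided by $|G|=pn|Z(G)|$ gives the stated value.
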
 

\begin{Lem}\label{lemZ}
 {\rm\cite[page 303]{bZ98}}  If $G$ is non-abelian, $H \trianglelefteq  G$ and $|G:H|=p$, where $p$ is a prime, then
\[
\Pr(G) = \frac{\Pr(H)}{p^2}+ \frac{p+1}{p |G|^2} \sum\limits_{x \in G-H}  |C_G (x)|.
\]
Moreover, if $H$ is abelian, then  $|\frac{G}{Z(G)}|= p|G'|$, $|C_G (x)|\, =\, |G:G'|$ for all $x \in G-H$,  and  
\[
\Pr(G) = \frac{1}{p^2} + \frac{p^2 -1}{p^2 |G'|}.
\] 
\end{Lem}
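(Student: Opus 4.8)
The plan is to base everything on the identity $|G|^{2}\Pr(G)=\sum_{g\in G}|C_{G}(g)|$ and to split the sum according to whether $g$ lies in $H$ or in $G\setminus H$. For the first displayed formula I would start from the observation that $C_{H}(g)=C_{G}(g)\cap H$, so $|C_{G}(g)|-|C_{H}(g)|=|C_{G}(g)\cap(G\setminus H)|$ for each $g\in H$; summing over $g\in H$ and then exchanging the order of summation rewrites $\sum_{g\in H}\bigl(|C_{G}(g)|-|C_{H}(g)|\bigr)$ as $\sum_{x\in G\setminus H}|C_{G}(x)\cap H|$. The step that makes the count collapse is that for every $x\in G\setminus H$ the centralizer $C_{G}(x)$ is not contained in $H$ (it contains $x$), so, $H$ having prime index $p$, $C_{G}(x)H=G$ and hence $|C_{G}(x)\cap H|=|C_{G}(x)|/p$. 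Since $\sum_{g\in H}|C_{H}(g)|=|H|^{2}\Pr(H)$, this yields $\sum_{g\in H}|C_{G}(g)|=|H|^{2}\Pr(H)+\tfrac{1}{p}\sum_{x\in G\setminus H}|C_{G}(x)|$, and adding $\sum_{x\in G\setminus H}|C_{G}(x)|$ gives $|G|^{2}\Pr(G)=|H|^{2}\Pr(H)+\tfrac{p+1}{p}\sum_{x\in G\setminus H}|C_{G}(x)|$; dividing by $|G|^{2}=p^{2}|H|^{2}$ is the first claim.

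For the ``moreover'' part I would assume $H$ abelian, so $\Pr(H)=1$ and $G'\subseteq H$ (since $G/H$ is cyclic). The crux is to show $[G,x]=G'$, equivalently $|C_{G}(x)|=|G:G'|$, for \emph{every} $x\in G\setminus H$. Fixing such an $x$ we have $G=H\langle x\rangle$ with $x^{p}\in H$, and the elementary commutator identity $[hx^{j},x]=[h,x]$ (valid for all $h\in H$ and all $j$) shows $[G,x]=\{[h,x]:h\in H\}=[H,x]$. Because $H$ is abelian, $h\mapsto h\,(xhx^{-1})^{-1}$ is a homomorphism of $H$ with image $[H,x]$, so $[H,x]$ is a subgroup of $H$; it is normalized by $H$ (as $[H,x]\subseteq H$ abelian) and by $x$ (since $x[h,x]x^{-1}=[xhx^{-1},x]$), hence $[H,x]\trianglelefteq G$. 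Passing to $G/[H,x]$, the image of $x$ then centralizes the abelian image of $H$, so $G/[H,x]$ is abelian, forcing $G'\subseteq[H,x]=[G,x]\subseteq G'$; thus $[G,x]=G'$ and, by \eqref{eqcgx}, $|C_{G}(x)|=|G|/|[G,x]|=|G:G'|$. Substituting $\sum_{x\in G\setminus H}|C_{G}(x)|=|G\setminus H|\,|G:G'|=\tfrac{p-1}{p}\cdot\tfrac{|G|^{2}}{|G'|}$ and $\Pr(H)=1$ into the first formula then produces $\Pr(G)=\tfrac{1}{p^{2}}+\tfrac{p^{2}-1}{p^{2}|G'|}$.

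For $|G/Z(G)|=p|G'|$ I would first note $Z(G)\subseteq H$ (a central element in $G\setminus H$ would satisfy $C_{G}(x)=G$, contradicting $|C_{G}(x)|=|G:G'|<|G|$), and then observe that for $z\in H$ one has $z\in Z(G)$ exactly when $z$ commutes with $x$, the rest being automatic since $H$ is abelian; so $Z(G)=C_{G}(x)\cap H$, which has order $|C_{G}(x)|/p=|G:G'|/p$, giving $|G/Z(G)|=p|G'|$. I expect the real content to lie in the claim $[G,x]=G'$ for all $x\in G\setminus H$ — in particular the passage to the quotient $G/[H,x]$, which is what upgrades the easy inclusion $[G,x]\subseteq G'$ to an equality; the remaining work is bookkeeping with centralizer orders under the index-$p$ hypothesis, the only place needing a little care being the exchange of summation in the first part.
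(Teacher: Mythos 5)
Your proof is correct. Note, however, that the paper offers no proof of this lemma at all: it is quoted from Berkovich--Zhmud' \cite[page 303]{bZ98}, so there is nothing internal to compare against. Your argument is a clean, self-contained derivation: the first formula follows from double-counting commuting pairs $(g,x)$ with $g\in H$, $x\in G-H$, together with the key observation that $C_G(x)H=G$ for $x\in G-H$ (since $C_G(x)\ni x$ and $H$ has prime index), which gives $|C_G(x)\cap H|=|C_G(x)|/p$. For the ``moreover'' part, the essential content is indeed the identity $[G,x]=G'$ for every $x\in G-H$: your reduction $[G,x]=[H,x]$ via $[hx^j,x]=[h,x]$, the verification that $[H,x]$ is a normal subgroup (using that $H$ is abelian and normal), and the passage to $G/[H,x]$ to force $G'\subseteq[H,x]$ are all sound, and combined with \eqref{eqcgx} they give $|C_G(x)|=|G:G'|$. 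The identification $Z(G)=C_G(x)\cap H$ (using non-abelianness of $G$ to get $Z(G)\subseteq H$) then yields $|G/Z(G)|=p|G'|$, and the numerical formula for $\Pr(G)$ is correct bookkeeping. The only thing your write-up adds beyond what the paper needs is that the first displayed formula does not actually require $G$ to be non-abelian; that hypothesis is used only to place $Z(G)$ inside $H$.
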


   Finally, using GAP \cite{gap08},   we have

\begin{Rem}\label{semidir}
 If $G' \nsubseteq Z(G)$, then $\frac{G}{Z(G)}$ is isomorphic to  $C_7  \rtimes C_3 $, $(C_3 \times C_3)  \rtimes C_3 $,  $ C_{13} \rtimes C_3 $,   $C_{19} \rtimes C_3$,  $C_3\times ( C_7 \rtimes C_3)$ or  $ (C_5 \times C_5)  \rtimes C_3$ according as $|\frac{G}{Z(G)}|$ is $21$,  $27$,  $39$,  $57$,  $63$ or $75$, where   $\rtimes$ stands for  semidirect product \cite[page 137]{jR84}.
\end{Rem}

 \section{Groups with $|G'| = p^2$ and $|G' \cap Z(G)| = p$}
  In this section we derive a formula for $\Pr(G)$ and determine the size of $\frac{G}{Z(G)}$ when $|G'| = p^2$ and  $|G' \cap Z(G)| = p$, where $p$ is a prime such that $\gcd (p-1,|G|) = 1$. 
\begin{Lem}\label{prcg}
 Let $p$ be a prime. If $|G' \cap Z(G)| = p$ and  $C_G (G')$ is non-abelian, then there is a positive integer $s$ such that $\frac{C_G (G')}{ Z(C_G (G'))} \cong (C_p \times C_p)^s$ and
 \[  
 \Pr(C_G (G')) = \frac{1}{p}\left(1 + \frac{p-1}{p^{2s}}\right).  
 \]

\end{Lem}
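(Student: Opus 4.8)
The plan is to write $C = C_G(G')$ and first pin down $C'$ precisely. Since $C \le G$ we have $C' \le G'$, and by \eqref{eqnilcg} $C' \subseteq Z(G) \subseteq Z(C)$; hence $C' \subseteq G' \cap Z(G)$. As $C$ is non-abelian, $C' \neq \{1\}$, so the hypothesis $|G' \cap Z(G)| = p$ forces $|C'| = p$ and in fact $C' = G' \cap Z(G)$, a central subgroup of $C$ of prime order. Thus $C$ is a group of nilpotency class $2$ whose derived subgroup is cyclic of order $p$.

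Next I would read off the structure of $C/Z(C)$ from the commutator pairing. Writing $V = C/Z(C)$, the map $(\bar x, \bar y) \mapsto [x,y]$ is well defined on $V \times V$ (because $C' \subseteq Z(C)$), takes values in the group $C' \cong \mathbb{Z}/p\mathbb{Z}$, and, using the identities \eqref{comid} together with the centrality of $C'$, is an alternating $\mathbb{Z}$-bilinear form; it is non-degenerate since its radical is exactly $Z(C)/Z(C) = \{1\}$. Non-degeneracy then forces $pV = 0$, so $V$ is an $\mathbb{F}_p$-vector space carrying a non-degenerate alternating form, whence $\dim_{\mathbb{F}_p} V$ is even, say $2s$; and $s \ge 1$ because $C$ is non-abelian. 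Therefore $C/Z(C) \cong (C_p)^{2s} \cong (C_p \times C_p)^s$.

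Finally I would compute $\Pr(C)$ by counting conjugacy classes directly. For $x \in C$, \eqref{eqcgx} applied to $C$ gives $|\Cl_C(x)| = |[C,x]|$, and $[C,x]$ is a subgroup of $C'$ (again by \eqref{comid} and the centrality of $C'$); since $|C'| = p$, it is $\{1\}$ when $x \in Z(C)$ and equals $C'$, of size $p$, when $x \notin Z(C)$. Hence $C$ has exactly $|Z(C)| + (|C| - |Z(C)|)/p$ conjugacy classes, and dividing by $|C|$ and substituting $|C : Z(C)| = p^{2s}$ yields
\[
\Pr(C) \;=\; \frac{1}{p} + \frac{|Z(C)|}{|C|}\cdot\frac{p-1}{p} \;=\; \frac{1}{p}\left(1 + \frac{p-1}{p^{2s}}\right),
\]
which is the asserted formula.

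I do not expect a serious obstacle here; the only point requiring care is that one should \emph{not} try to invoke Lemma \ref{prgnil}, which is the natural tool for groups with $G' \subseteq Z(G)$ but is stated only for $p$-groups — and $C_G(G')$ need not be a $p$-group, so the bare class-count above is the right route. The mildly delicate step is the bilinear-form argument: that $V = C/Z(C)$ is elementary abelian is a \emph{consequence} of non-degeneracy (since $|C'| = p$), not an assumption, and it is exactly what makes the evenness of $\dim V$ available.
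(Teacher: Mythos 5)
Your proof is correct, but it takes a genuinely more self-contained route than the paper's, whose entire argument is one line: by \eqref{eqnilcg}, $C=C_G(G')$ satisfies $C'\subseteq Z(C)$ with $C'=G'\cap Z(G)$ of order $p$, and Rusin's formula (Lemma \ref{prgnil}) is then invoked with $K=\{1\}$ as the only proper subgroup of $C'$ having cyclic quotient; that lemma simultaneously delivers $C/Z(C)=C/\{1\}^*\cong\prod(C_{p^{n_i}}\times C_{p^{n_i}})$ with every $p^{n_i}\le p^{n_1}=|C':\{1\}|=p$, i.e.\ $C/Z(C)\cong(C_p\times C_p)^s$, together with the stated value of $\Pr(C)$. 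Your objection that Lemma \ref{prgnil} is stated only for $p$-groups is literally correct, but the gap is routine rather than substantive: $C'\subseteq Z(C)$ makes $C$ nilpotent, hence the direct product of its Sylow subgroups, and $|C'|=p$ forces every Sylow subgroup except the Sylow $p$-subgroup $P$ to be abelian, so $\Pr(C)=\Pr(P)$ by \eqref{eqprod} and $C/Z(C)\cong P/Z(P)$, and the lemma applies to $P$ (this is the same reduction the paper carries out in Lemma \ref{nilcla3}). What you do instead --- realizing $C/Z(C)$ as an $\mathbb{F}_p$-space carrying the non-degenerate alternating commutator form with values in $C'\cong C_p$ to force even dimension, and then counting conjugacy classes directly from $|\Cl_C(x)|=|[C,x]|\in\{1,p\}$ --- is a complete and correct re-derivation of the relevant special case of Rusin's result from first principles; it buys independence from \cite{dR79} and from the $p$-group hypothesis at the cost of reproving, by standard extraspecial-type arguments, facts the paper prefers to quote.
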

\begin{proof}
In view of \eqref{eqnilcg}, the result follows from Lemma \ref{prgnil}.
\end{proof}

\begin{Lem}\label{notnil}
Let $p$ be a prime and $G' \nsubseteq Z(G)$ such that  one of the following conditions holds:
\begin{enumerate}
\item $G'  \cong  C_{p^2}$  \ and   \ $\gcd(p-1, |G|)=1$,\label{nna}  
\item $G'  \cong  C_p \times C_p$  \  and  \  $\gcd(p^2 -1, |G|)=1$.\label{nnb}
\end{enumerate}
Then $|\frac{G}{C_G (G')}| = |G' \cap Z(G)|  = |\Cl_G (x)| = p$  for all  $x \in G'-Z(G)$.
\end{Lem}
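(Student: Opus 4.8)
The plan is to determine $|G/C_G(G')|$ first, then to extract $|G'\cap Z(G)|$ from an elementary fixed‑point count, and finally to read off $|\Cl_G(x)|$. Throughout I shall use that $G'$ is a subgroup of order $p^2$, so $p$ divides $|G|$, and that the hypothesis $G'\nsubseteq Z(G)$ is equivalent to each of $C_G(G')\ne G$ and $G'\cap Z(G)\subsetneq G'$.

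\emph{Step 1.} By Lemma~\ref{lemcent}\ref{lca}, $G/C_G(G')$ embeds in $\Aut(G')$, so its order divides $\gcd\bigl(|\Aut(G')|,|G|\bigr)$. In case~\ref{nna} one has $|\Aut(G')|=|\Aut(C_{p^2})|=p(p-1)$, and in case~\ref{nnb} one has $|\Aut(G')|=|\Aut(C_p\times C_p)|=(p^2-1)(p^2-p)=p(p-1)^2(p+1)$. In either situation the prime‑to‑$p$ part of $|\Aut(G')|$ is a divisor of $(p-1)(p+1)=p^2-1$, and this number is coprime to $|G|$ — in case~\ref{nna} because $\gcd(p-1,|G|)=1$, and in case~\ref{nnb} directly from $\gcd(p^2-1,|G|)=1$. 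Since also $p\mid|G|$, it follows that $\gcd\bigl(|\Aut(G')|,|G|\bigr)=p$, hence $|G/C_G(G')|$ divides $p$; as $C_G(G')\ne G$, we conclude $|G/C_G(G')|=p$.

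\emph{Step 2.} Conjugation turns $G'$ into a set on which $Q:=G/C_G(G')\cong C_p$ acts, and the set of fixed points is exactly $C_{G'}(G)=G'\cap Z(G)$, which is a subgroup of the abelian $p$‑group $G'$. Because a $p$‑group acting on a finite set has a fixed‑point set whose cardinality is congruent modulo $p$ to that of the set, $|G'\cap Z(G)|\equiv|G'|\equiv0\pmod p$; in particular $|G'\cap Z(G)|>1$. On the other hand $G'\cap Z(G)\subsetneq G'$, and $|G'\cap Z(G)|$ is a power of $p$. The only value left is $|G'\cap Z(G)|=p$.

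\emph{Step 3.} Let $x\in G'-Z(G)$. Since $x\in G'$ we have $C_G(G')\subseteq C_G(x)$, so by \eqref{eqcgx} the index $|\Cl_G(x)|=|G:C_G(x)|$ divides $|G:C_G(G')|=p$; and by \eqref{eqclx} $|\Cl_G(x)|\ge p$ because $x\notin Z(G)$. Hence $|\Cl_G(x)|=p$, and combining the three steps yields $|G/C_G(G')|=|G'\cap Z(G)|=|\Cl_G(x)|=p$. I do not expect a genuine obstacle here. The one place demanding attention is the arithmetic of Step~1: one must verify that the coprimality hypotheses annihilate the \emph{entire} prime‑to‑$p$ part of $|\Aut(G')|$, which in case~\ref{nnb} relies on both $p-1$ and $p+1$ dividing $p^2-1$. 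A secondary point is that in Step~2 one may invoke the classical $p$‑group fixed‑point lemma precisely because $Q$ is itself a $p$‑group, so no coprimality of orders is needed there.
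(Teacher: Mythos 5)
Your argument is correct and follows essentially the same route as the paper's one-line proof: embed $G/C_G(G')$ in $\Aut(G')$ via Lemma~\ref{lemcent}\ref{lca}, use the coprimality hypotheses to force $|G/C_G(G')|$ and the class sizes inside $G'$ to divide $p$, and count (your fixed-point count mod $p$ for the action of $G/C_G(G')$ on $G'$ is the same computation as the paper's observation that $G'-Z(G)$ is a union of conjugacy classes). Two harmless imprecisions: in case~\ref{nnb} the prime-to-$p$ part of $|\Aut(C_p\times C_p)|$ is $(p-1)^2(p+1)$, which is \emph{not} a divisor of $p^2-1$ --- what you need, and what is true, is that every prime dividing it divides $p^2-1$; and in Step~3 the appeal to \eqref{eqclx} is not literally available since $p$ need not be the smallest prime divisor of $|G|$ under these hypotheses, but all you need there is $|\Cl_G(x)|>1$, which is immediate from $x\notin Z(G)$.
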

\begin{proof}
Since  $|\Aut (C_{p^2})| = p(p-1)$, $|\Aut (C_p \times C_p)|= p(p+1)(p-1)^2$ and $G' - Z(G)$ is a union of conjugacy classes of $G$, the result follows from Lemma \ref{lemcent}\ref{lca}.  
\end{proof}
 
\begin{Lem}\label{notnil2}
 If $G' \nsubseteq Z(G)$ and $|\frac{G}{C_G (G')}| = |G' \cap Z(G)| =p$, where $p$ is the smallest prime divisor of $|G|$,  then  $Z(G)^* \subsetneq C_G (G')$  and $\frac{C_G (G')}{Z(G)^*}$   can be embedded in  $\frac{G'}{G' \cap Z(G)}$.
\end{Lem}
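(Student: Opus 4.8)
The plan is to exhibit an explicit homomorphism from $C_G(G')$ into $G'/(G'\cap Z(G))$ whose kernel is precisely $Z(G)^*$; once this is done, the second assertion follows from the first isomorphism theorem, and the strictness $Z(G)^*\subsetneq C_G(G')$ will essentially fall out of Lemma~\ref{hstar}\ref{hse}.

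First I would settle the inclusion $Z(G)^*\subseteq C_G(G')$, which in fact holds in any group. If $x\in Z(G)^*$, then $[g,x]\in Z(G)$ for every $g\in G$, and using the second identity in \eqref{comid} (and the fact that conjugation fixes central elements) one checks that $g\mapsto[x,g]$ is a homomorphism $G\to Z(G)$ with kernel $C_G(x)$; hence $G/C_G(x)$ is abelian, so $G'\subseteq C_G(x)$, that is, $x\in C_G(G')$. For the strictness, note that if $Z(G)^*=C_G(G')$ held, then $G/Z(G)^*=G/C_G(G')$ would be a group of prime order $p$, hence a nontrivial cyclic group, contradicting Lemma~\ref{hstar}\ref{hse} applied to $H=Z(G)$. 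Therefore $Z(G)^*\subsetneq C_G(G')$.

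For the embedding, I would fix an element $y\in G\setminus C_G(G')$ (possible since $C_G(G')\ne G$). Since $|G/C_G(G')|=p$ with $p$ prime, $G/C_G(G')\cong C_p$, so Lemma~\ref{lemcent}\ref{lcb} gives $G=\langle y,C_G(G')\rangle$; concretely, every $g\in G$ can be written $g=y^i c$ with $0\le i<p$ and $c\in C_G(G')$. I would then define $\psi\colon C_G(G')\to G'$ by $\psi(x)=[y,x]$. Using the second identity in \eqref{comid} together with the fact that $x\in C_G(G')$ commutes with the element $[y,z]\in G'$ whenever $z\in C_G(G')$, one verifies that $\psi$ is a homomorphism; it is crucial here to use $[y,x]$ and not $[x,y]$, since the latter only yields an anti-homomorphism. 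Composing $\psi$ with the canonical surjection $G'\to G'/(G'\cap Z(G))$ produces a homomorphism $\overline{\psi}\colon C_G(G')\to G'/(G'\cap Z(G))$, so by the first isomorphism theorem $C_G(G')/\ker\overline{\psi}$ embeds in $G'/(G'\cap Z(G))$. It then remains only to show $\ker\overline{\psi}=Z(G)^*$.

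The computation of $\ker\overline{\psi}$ is the technical heart of the argument, and I expect the main obstacle to be nothing more than keeping the commutator manipulations straight and invoking \eqref{eqnilcg} at the right moment. One inclusion is immediate: if $x\in Z(G)^*$, then $x\in C_G(G')$ by the first step, and $[y,x]\in G'\cap Z(G)$, so $\overline{\psi}(x)=\overline{1}$. Conversely, suppose $x\in C_G(G')$ with $[y,x]\in Z(G)$; I would show $[g,x]\in Z(G)$ for all $g\in G$, whence $x\in Z(G)^*$. Writing $g=y^i c$ with $c\in C_G(G')$ and expanding via the first identity in \eqref{comid} gives $[g,x]=y^i[c,x]y^{-i}\,[y^i,x]$. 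Now $[c,x]\in(C_G(G'))'\subseteq Z(G)$ by \eqref{eqnilcg}, so the conjugation $y^i(\cdot)y^{-i}$ acts trivially on it, while $[y^i,x]\in Z(G)$ follows by an easy induction on $i$ from $[y,x]\in Z(G)$ and \eqref{comid}; hence $[g,x]\in Z(G)$, as needed. With $\ker\overline{\psi}=Z(G)^*$ in hand, both claims of the lemma follow.
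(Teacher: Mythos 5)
Your proposal is correct and follows essentially the same route as the paper: strictness via Lemma~\ref{hstar}\ref{hse}, and the embedding via the commutator map $x\mapsto[y,x]$ from $C_G(G')$ into $G'/(G'\cap Z(G))\cong G'Z(G)/Z(G)$ with kernel $Z(G)^*$, using \eqref{eqnilcg} and Lemma~\ref{lemcent}\ref{lcb} exactly where the paper does. The only (harmless) divergence is your first step: you prove $Z(G)^*\subseteq C_G(G')$ by a direct argument valid in any group (it is just $Z_2(G)\le C_G(G')$), whereas the paper deduces it from the class-size bound \eqref{eqpclx} using the hypothesis that $p$ is the smallest prime divisor.
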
   
\begin{proof}   By Lemma \ref{hstar}\ref{hse},     $Z(G)^* \neq C_G (G')$. Let $z \in Z(G)^*$.
 Using \eqref{eqcgx} and the definition of $Z(G)^*$, 
  we have $|\Cl_G (z)| \leq |G' \cap Z(G)| = p$. So, by \eqref{eqpclx},   $z \in C_G (G')$. Thus,
   $Z(G)^* \subsetneq C_G (G')$.  Now, let $ x  \in G - C_G (G')$ and note that  $G'$ is
    abelian. Using \eqref{comid}, it is easy to see  that the mapping $f \colon C_G (G') \to G'Z(G)/Z(G)$,
     defined by $f(z) = [z, x ]Z(G)$, is a   homomorphism.  Also, using the definition of $Z(G)^*$ together with  \eqref{eqnilcg} and Lemma \ref{lemcent}\ref{lcb}, we have $\ker f = Z(G)^*$. Thus, it follows that $C_G (G')/ Z(G)^*$ is isomorphic to a subgroup of $\frac{G'Z(G)}{Z(G)} $.  
\end{proof}

\begin{Lem}\label{pgroup}
Let $p$ be a prime. If $G$ is a $p$-group such that $|G' \cap Z(G)| = p$ and $|G'|= p^2$, 
then $Z(G)^* \cap Z(C_G (G')) = G' Z(G)$.
\end{Lem}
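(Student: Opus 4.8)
The plan is to write $Z = Z(G)$, $Z_1 = G' \cap Z$ (so $|Z_1| = p$) and $C = C_G(G')$, and to lean on two facts made available by the earlier lemmas. First, $C$ is a maximal normal subgroup of $G$ of index $p$: indeed $G' \nsubseteq Z$ (since $|Z_1| = p < p^2 = |G'|$), $G'$ is abelian of order $p^2$, and the coprimality hypotheses of Lemma~\ref{notnil} hold automatically as $|G|$ is a power of $p$, so Lemma~\ref{notnil} gives $|G:C| = p$; in particular $G = \langle C, g \rangle$ for every $g \in G - C$. Second, $G$ is nilpotent of class $3$ by Lemma~\ref{nilcla3} (whose coprimality hypothesis is likewise automatic), so $[G',G] \subseteq Z$. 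These two facts immediately give the inclusion $G'Z \subseteq Z(G)^* \cap Z(C)$: by \eqref{comid}, $[h, uv] = [h,u] \in [G,G'] \subseteq Z$ for all $h \in G$, $u \in G'$, $v \in Z$, so $G'Z \subseteq Z(G)^*$; and $G' \subseteq Z(C)$, since $G'$, being abelian, lies in $C = C_G(G')$ and is centralised by $C$, while also $Z \subseteq Z(C)$, so $G'Z \subseteq Z(C)$.

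For the reverse inclusion I would take $x \in Z(G)^* \cap Z(C)$ and may assume $x \notin Z$ (otherwise $x \in G'Z$ trivially). Since $x \in Z(C)$ we have $C \subseteq C_G(x) \subsetneq G$, and maximality of $C$ forces $C_G(x) = C$, so $|\Cl_G(x)| = p$ by \eqref{eqcgx}; since $x \in Z(G)^*$ we get $[G,x] \subseteq Z \cap G' = Z_1$, whence $[G,x] = Z_1$ on comparing cardinalities. Now fix $g \in G - C$. Using \eqref{comid} together with $[g,G'] \subseteq [G,G'] \subseteq Z$, one checks that $u \mapsto [g,u]$ is a group homomorphism $G' \to Z_1$; it is nontrivial because $g \notin C_G(G')$, hence surjective as $|Z_1| = p$. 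So I can pick $a \in G'$ with $[g,a] = [g,x]$ and set $y = a^{-1}x$; a short computation with \eqref{comid}, using that $[g,a]$ and $[g,x]$ lie in $Z_1 \subseteq Z$, yields $[g,y] = 1$, while $y \in Z(C)$ because $a^{-1} \in G' \subseteq Z(C)$ and $x \in Z(C)$. Thus $y$ centralises both $C$ and $g$, hence $\langle C, g \rangle = G$, so $y \in Z$; consequently $x = ay \in G'Z$, as required.

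I expect the decisive step to be this last one. Its engine is the identity $Z(C) \cap C_G(g) = Z(G)$ — which is exactly what permits one to ``correct'' $x$ by an element of $G'$ into the centre — and that identity rests on $C = C_G(G')$ having index $p$, so that $C$ together with one element of $G - C$ generates all of $G$. Once $|G:C_G(G')| = p$ and $[G',G] \subseteq Z(G)$ are secured, the rest is routine manipulation of the commutator identities \eqref{comid}; note in particular that this argument handles the cases ``$C_G(G')$ abelian'' and ``$C_G(G')$ non-abelian'' uniformly, so no appeal to Lemma~\ref{prcg} is needed.
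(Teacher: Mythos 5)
Your proof is correct and follows essentially the same route as the paper: the forward inclusion via nilpotency of class $3$ (Lemma~\ref{nilcla3}) and $|G:C_G(G')|=p$ (Lemma~\ref{notnil}), and the reverse inclusion by correcting the given element by a suitable $a\in G'$ so that the result lies in $C_G(g)\cap Z(C_G(G'))=Z(G)$. The paper realizes your surjective homomorphism $G'\to G'\cap Z(G)$ concretely as the powers $[y^{-i},x]$ of a fixed $y\in G'-Z(G)$, but the mechanism is identical.
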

\begin{proof}
 By Lemma \ref{notnil}, $\frac{G}{C_G (G')}$ is abelian. Also, by Lemma \ref{nilcla3}, $G$ is nilpotent of class $3$.  So, it follows that $G' Z(G) \subseteq Z(G)^* \cap Z(C_G (G'))$.  For the other direction, choose  $ x  \in G - C_G (G')$ and $y \in G'-Z(G)$ arbitrarily.     Then, by Lemma \ref{lemcent}\ref{lcb},   $C_G ( x ) \cap Z(C_G (G')) = Z(G)$. It follows that  $[y^{-1}, x ] \neq 1$, since $y \notin Z(G)$.  Now, let $z \in Z(G)^* \cap Z(C_G (G'))$. Since   $y,  z   \in   Z(G)^*$, we have   $[z, x ], [y^{-1}, x ] \in G' \cap Z(G)$, a cyclic group of order $p$.  So, there   exists a positive integer $i$ with $1 \leq i \leq p-1$ such that $[z, x ] = [y^{-1}, x ]^i = [y^{-i}, x ]$, or equivalently, $y^i z \in C_G ( x )$. Since  $x \in G - C_G (G')$ is chosen arbitrarily, $y   \in  G' \subseteq Z(C_G (G'))$ and $z \in Z(C_G (G'))$, it follows that  $z \in G' Z(G)$.
 This completes the proof.
\end{proof}
 
We now prove the main result of this section.
\begin{Thm}\label{main}
Let $p$ be a prime such that $\gcd (p-1,|G|) = 1$. If $|G'|= p^2$ and $|G' \cap Z(G)| = p$,  then
\begin{enumerate}
\item $\Pr(G) = 
\begin{cases}
  \frac{2 p^2 - 1}{p^4}  &\text{  if $C_G (G')$ is abelian}\\
\frac{1}{p^4} \left( \frac{p-1}{p^{2s-1}} + p^2 +p -1 \right)     &\text{  otherwise,}
\end{cases}$ \smallskip  \label{mna}
\item $ |\frac{G}{Z(G)} | =
\begin{cases}
 p^3  &\text{  if $C_G (G')$ is abelian}\\
 p^{2s+2} \text{ or } p^{2s+3}   &\text{  otherwise,}
\end{cases}$\smallskip \label{mnb}
\end{enumerate}
where   $p^{2s }=|C_G (G') : Z(C_G (G'))|$.  Moreover, 
\[ \textstyle{
|\frac{G}{G' \cap Z(G)}  : Z(\frac{G}{G' \cap Z(G)})|= |\frac{G}{Z(G)}:Z(\frac{G}{Z(G)})|=p^2.}
\]
\end{Thm}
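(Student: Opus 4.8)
The plan is to show that both indices in the statement equal $|G:Z(G)^*|$, and that this common value is $p^2$; the $H^{*}$-calculus of Lemma~\ref{hstar} will handle the passage to the quotient groups, while Lemmas~\ref{notnil} and~\ref{notnil2} will pin the index down. The first move is to reduce to a $p$-group: by Lemma~\ref{nilcla3} we may write $G\cong P\times A$ with $A$ abelian and $P$ a $p$-group satisfying $|P'|=p^2$ and $|P'\cap Z(P)|=p$. Since $A$ is abelian, $G'=P'\times\{1\}$, $Z(G)=Z(P)\times A$, and $G'\cap Z(G)=(P'\cap Z(P))\times\{1\}$, so that $G/Z(G)\cong P/Z(P)$ and $G/(G'\cap Z(G))\cong\bigl(P/(P'\cap Z(P))\bigr)\times A$. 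Because the centre of a direct product with an abelian second factor is the product of the centres, the indices $|\tfrac{G}{Z(G)}:Z(\tfrac{G}{Z(G)})|$ and $|\tfrac{G}{G'\cap Z(G)}:Z(\tfrac{G}{G'\cap Z(G)})|$ coincide with the corresponding indices computed inside $P$. Hence it suffices to prove the assertion when $G$ itself is a $p$-group, which I will assume from now on; in particular $p$ is the smallest prime divisor of $|G|$ and $\gcd(p-1,|G|)=\gcd(p^2-1,|G|)=1$.

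Assuming $G$ is a $p$-group, I would next establish $|G:Z(G)^*|=p^2$. From $|G'\cap Z(G)|=p<p^2=|G'|$ we get $G'\nsubseteq Z(G)$, and $G'$, being a group of order $p^2$, is isomorphic to $C_{p^2}$ or to $C_p\times C_p$; in either case Lemma~\ref{notnil} gives $|G:C_G(G')|=|G'\cap Z(G)|=p$. Now Lemma~\ref{notnil2} applies and yields $Z(G)^*\subsetneq C_G(G')$ together with an embedding of $C_G(G')/Z(G)^*$ into $G'/(G'\cap Z(G))\cong C_p$; since the inclusion $Z(G)^*\subsetneq C_G(G')$ is proper, this forces $C_G(G')/Z(G)^*\cong C_p$, i.e.\ $|C_G(G'):Z(G)^*|=p$. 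Multiplying, $|G:Z(G)^*|=|G:C_G(G')|\,|C_G(G'):Z(G)^*|=p^2$.

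Finally I would pass to the quotients via the $H^{*}$-operator. Lemma~\ref{hstar}\ref{hsd} with $H=Z(G)$ gives $Z(G/Z(G))=Z(G)^*/Z(G)$, so $|\tfrac{G}{Z(G)}:Z(\tfrac{G}{Z(G)})|=|G:Z(G)^*|=p^2$. Likewise, Lemma~\ref{hstar}\ref{hsa} with $H=Z(G)$ gives $(G'\cap Z(G))^*=Z(G)^*$, and then Lemma~\ref{hstar}\ref{hsd} with $H=G'\cap Z(G)$ gives $Z\bigl(G/(G'\cap Z(G))\bigr)=(G'\cap Z(G))^*/(G'\cap Z(G))=Z(G)^*/(G'\cap Z(G))$, whence $|\tfrac{G}{G'\cap Z(G)}:Z(\tfrac{G}{G'\cap Z(G)})|=|G:Z(G)^*|=p^2$ as well. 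I expect the only delicate point to be the reduction in the first paragraph: one must verify carefully that descending to the Sylow $p$-subgroup $P$ really preserves both "index of centre" quantities, the reason this is needed being that Lemma~\ref{notnil2} requires $p$ to be the smallest prime divisor of the ambient group, which may fail for $G$ but certainly holds for $P$. Once that bookkeeping is in place, the proof is just the observation that everything collapses onto the single equality $|G:Z(G)^*|=p^2$.
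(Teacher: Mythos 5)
Your argument for the final ``Moreover'' clause is correct and is essentially the paper's own: the authors likewise reduce to a $p$-group via Lemma~\ref{nilcla3}, obtain $|G:C_G(G')|=p$ from Lemma~\ref{notnil} and $|C_G(G'):Z(G)^*|=p$ from Lemma~\ref{notnil2} (the embedding of $C_G(G')/Z(G)^*$ into $G'/(G'\cap Z(G))\cong C_p$ together with the proper containment), and then invoke Lemma~\ref{hstar}\ref{hsa} and \ref{hsd} to identify both central-quotient indices with $|G:Z(G)^*|=p^2$. Your care over the reduction step --- checking that both ``index of centre'' quantities pass unchanged to the direct factor $P$, so that the smallest-prime hypothesis of Lemma~\ref{notnil2} becomes available --- is a detail the paper leaves implicit, and you handle it correctly.

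The gap is one of coverage rather than of reasoning: the theorem makes three assertions and you have proved only the last. Parts \ref{mna} and \ref{mnb} --- the formula for $\Pr(G)$ and the determination of $|G/Z(G)|$ --- carry the substance of the result, and nothing in your proposal addresses them. For comparison, the paper obtains \ref{mna} by showing $|\Cl_G(x)|=p^2$ for every $x\in G-C_G(G')$ (via \eqref{eqpclx} and Lemma~\ref{lemcent}\ref{lcb}) and then applying Lemma~\ref{lemZ} to the index-$p$ subgroup $C_G(G')$ together with the value of $\Pr(C_G(G'))$ from Lemma~\ref{prcg}; it obtains \ref{mnb} from the normal series $Z(G)\subseteq G'Z(G)\subseteq Z(C_G(G'))\subseteq C_G(G')\subseteq G$, using Lemma~\ref{pgroup} and the second isomorphism theorem to pin down $|Z(C_G(G')):G'Z(G)|$. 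None of these steps follows from what you have written, so the proposal as it stands does not establish the stated theorem; you would need to supply the arguments for \ref{mna} and \ref{mnb} separately.
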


\begin{proof}
In view of   Lemma \ref{nilcla3}, we can  assume that $G$ is a $p$-group. Therefore, by Lemma \ref{notnil}, we have $|G:C_G (G')| = p$.

\ref{mna}  If $x \in G - C_G (G')$, then it  follows from \eqref{eqpclx} and Lemma \ref{lemcent}\ref{lcb} that  $|\Cl_G (x)| = p^2$, and so,  $|C_G (x)| = |G|/p^2$. Now, the result can be easily deduced from Lemma \ref{lemZ} and Lemma \ref{prcg}.

\ref{mnb} By Lemma \ref{notnil2},  we have  $|C_G (G'): Z(G)^*|=p$. Therefore, using the second isomorphism theorem   \cite[page 25]{jR84}    and Lemma \ref{pgroup}, we have
\[
|Z(C_G (G')):G'Z(G)| = 
\begin{cases}
p &\text{  if $C_G (G')$ is abelian}\\
 1 \text{ or } p   &\text{  otherwise}.
\end{cases}
\]
Hence, using the first part of Lemma \ref{prcg}, the result follows from the normal series
$
Z(G) \subseteq G'Z(G) \subseteq Z(C_G (G')) \subseteq C_G (G') \subseteq G$.

 The final statement follows from Lemma \ref{notnil2} and Lemma \ref{hstar}\ref{hsa}\ref{hsd}.  
\end{proof}

\section{Groups of odd order with $\Pr(G) \geq \frac{11}{75}$}
In this section we accomplish our main objective.
\begin{Lem}\label{lem15-21}
Let $|G|$ be odd.
\begin{enumerate}
\item  If $G' \cong C_{15}$, then $G' \subseteq Z(G)$.\label{1521a}
\item  If $G' \cong  C_{21}$ and $G' \nsubseteq Z(G)$, then 
 $|\frac{G}{C_G (G')}| = |G' \cap Z(G)| = 3$ and 
 exactly one of the following  conditions holds: \label{1521b}
 
 \begin{enumerate}
\item \  $|\Cl_G (x)|= 21$  for all $x \in G - C_G (G')$. \label{1521bi}
\item \ There exists a subset $X$ of \  $G - C_G (G')$ such that 
\begin{eqnarray*}
|X| &\; = \;& 2|Z(C_G (G'))|   \quad \textrm{and  }\\
 |\Cl_G (x)| &\; = \;& 
\begin{cases}
7 &\text{  if   } x \in X  \\
 21   &\text{  if   } x \in G - (C_G (G') \cup X).
\end{cases}
\end{eqnarray*} \label{1521bii}
\end{enumerate}
\end{enumerate}
\end{Lem}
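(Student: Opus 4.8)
The whole argument rests on Lemma~\ref{lemcent}\ref{lca} together with $|G|$ being odd. Since $|\Aut(C_{15})|=8$, for~\ref{1521a} the index $|G:C_G(G')|$ divides $\gcd(|G|,8)=1$, so $G=C_G(G')$, i.e.\ $G'\subseteq Z(G)$. For~\ref{1521b} we have $|\Aut(C_{21})|=12$, so $|G:C_G(G')|$ is an odd divisor of $12$ and hence equals $1$ or $3$; it is not $1$ because $G'\nsubseteq Z(G)$, so $|G:C_G(G')|=3$. To obtain $|G'\cap Z(G)|=3$, note that $G'-Z(G)$ is a union of $G$-conjugacy classes, and that for $x$ in this set $|\Cl_G(x)|$ divides $|G:C_G(G')|=3$ by \eqref{eqcgx}, while $|\Cl_G(x)|\ge p=3$ by \eqref{eqclx}, where $p=3$ is the smallest prime divisor of the odd number $|G|$ (indeed $3\mid|G'|$); so every such class has size $3$, whence $|G'\cap Z(G)|\equiv|G'|=21\equiv0\pmod{3}$, and being a proper divisor of $21$ it must equal $3$.

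From now on write $C:=C_G(G')$; in the cyclic group $G'\cong C_{21}$ let $P:=G'\cap Z(G)$ be its unique subgroup of order $3$ (it lies in $Z(G)$) and $Q$ its unique subgroup of order $7$, so that $G'=P\times Q$. For each $x\in G-C$, since $G/C\cong C_3$, Lemma~\ref{lemcent}\ref{lcb} gives $|\Cl_G(x)|=|[C,x]|$, where $[C,x]$ is a subgroup of $G'$; its order divides $21$ and, by \eqref{eqpclx}, exceeds $3$, so $[C,x]\in\{Q,G'\}$ and $|\Cl_G(x)|\in\{7,21\}$. Put
\[
X:=\{x\in G-C:\ [C,x]=Q\}=\{x\in G-C:\ |\Cl_G(x)|=7\}.
\]
Then assertion~\ref{1521bi} is precisely the case $X=\emptyset$, and assertion~\ref{1521bii} is precisely the case $X\ne\emptyset$; these are incompatible, since any $X$ as in~\ref{1521bii} is nonempty (because $|Z(C)|\ge1$). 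So it remains only to prove that $X\ne\emptyset$ forces $|X|=2|Z(C)|$.

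For that I would work with the homomorphisms $f_x\colon C\to G'$, $z\mapsto[z,x]$ --- a homomorphism because $C$ centralizes the subgroup $[C,x]\subseteq G'$ --- and, for $c\in C$, the maps $f_c\colon C\to G'$, $z\mapsto[z,c]$, which by \eqref{eqnilcg} take values in $C'\subseteq G'\cap Z(G)=P$ and are again homomorphisms; from \eqref{comid} and $[z,c]\in Z(G)$ one gets $[z,xc]=[z,x][z,c]$, so $[C,xc]$ is the image of the homomorphism $z\mapsto f_x(z)f_c(z)$. Fixing $x\in G-C$ and projecting along $G'=P\times Q$, the $Q$-component of $[C,xc]$ equals that of $[C,x]$, which is all of $Q$; hence $[C,xc]=Q$ if and only if its $P$-component is trivial, i.e.\ if and only if $f_c$ agrees with the fixed homomorphism $z\mapsto\pi_P([z,x])^{-1}$. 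Since $c\mapsto f_c$ is a homomorphism from $C$ into the group $\mathrm{Hom}(C,P)$ with kernel $Z(C)$, the set $\{c\in C:\ xc\in X\}$ is either empty or a single coset of $Z(C)$; the same holds with $x^2\in G-C$ in place of $x$, so $|X\cap xC|$ and $|X\cap x^2C|$ each lie in $\{0,|Z(C)|\}$. Finally $y\mapsto y^{-1}$ is a bijection of $X\cap xC$ onto $X\cap x^2C$, as it preserves conjugacy-class sizes and carries $xC$ to $x^{-1}C=x^2C$; hence those two sets are simultaneously empty or nonempty, so $|X|\in\{0,2|Z(C)|\}$, and when $X\ne\emptyset$ we conclude $|X|=2|Z(C)|=2|Z(C_G(G'))|$.

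The step I expect to be the real obstacle is this last one: ruling out the value $|X|=|Z(C_G(G'))|$, i.e.\ showing that the two nontrivial cosets of $C_G(G')$ contribute symmetrically --- for which the inversion bijection is the decisive device. The rest is routine bookkeeping with the commutator identities \eqref{comid} and with the subgroup lattice of the cyclic group $C_{21}$.
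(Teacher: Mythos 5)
Your proof is correct and follows essentially the same route as the paper's: parts \ref{1521a} and the first half of \ref{1521b} via the embedding of $G/C_G(G')$ in $\Aut(G')$, and the count $|X|=2|Z(C_G(G'))|$ via the identity $[z,xc]=[z,x][z,c]$ with $[z,c]\in (C_G(G'))'\subseteq G'\cap Z(G)$ together with the inversion bijection linking the two nontrivial cosets of $C_G(G')$. The paper merely phrases your kernel--fiber argument concretely, fixing a witness $x_0$ with $|\Cl_G(x_0)|=7$ and exhibiting, for $x=x_0w$ with $w\notin Z(C_G(G'))$, an element $w_1$ for which $[w_1,x]=[w_1,x_0][w_1,w]$ has order divisible by $3$.
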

\begin{proof}
Note that   $|\Aut (C_{15})| = 8$ and $|\Aut (C_{21})| = 12$. Therefore, \ref{1521a} and the first part of \ref{1521b} follow from Lemma \ref{lemcent}\ref{lca}.

For the second part of \ref{1521b}, we begin with an observation, which follows from   \eqref{eqpclx} and Lemma \ref{lemcent}\ref{lcb}, that $|\Cl_G (x)|= 7$ or $21$ for all $x \in G - C_G (G')$. 

Assume that the condition \ref{1521bi} fails to hold. Then,  $|\Cl_G (x_0)|= 7$ for some $x_0 \in G - C_G (G')$.  Since $|\frac{G}{C_G (G')}| = 3$, it is easy to see that
\begin{equation}\label{eqcoset}
G - C_G (G') = x_0 C_G (G') \, \sqcup \, x_0 ^{-1}C_G (G').
\end{equation}
Let us write $X\, = \, x_0 Z(C_G (G')) \, \sqcup \, x_0 ^ {-1} Z(C_G (G'))$. Clearly, $X \subseteq G - C_G (G')$ and $|X|    =   2|Z(C_G (G'))|$.  Since $|\Cl_G (x_0)|  = |\Cl_G (x_0 ^{-1})|$, it follows from \eqref{eqcgx}, \eqref{comid} and  Lemma \ref{lemcent}\ref{lcb} that   $|\Cl_G (x)| = 7$ for all $x \in X$.    On the other hand, consider an element $x \in G - (C_G (G') \cup X)$. Using \eqref{eqcoset},  we have  $x=x_0 w$ or $x_0 ^{-1} w$ for some $w \in C_G (G') - Z(C_G (G'))$. Choose $w_1 \in C_G (G') - Z(C_G (G'))$  such that $[w_1 , w] \neq 1$.    Then, by \eqref{comid} and \eqref{eqnilcg}, we have  
\[
[w_1, x] = 
\begin{cases}
[w_1, x_0][w_1, w]   & \text { if } x = x_0 w  \\

[w_1, x_0 ^ {-1}][w_1, w]   & \text { if } x = x_0 ^ {-1} w.
\end{cases}      
\]
Note that $\ord ([w_1, w]) = 3 $  and $\ord([w_1, x_0]) =\ord([w_1, x_0 ^ {-1} ]) = 1$ or $7$, and so,  it follows that  $\ord([w_1, x]) = 3$ or $21$. Thus, using \eqref{eqcgx}, we have $|\Cl_G (x)| =  21 $.   
\end{proof}

 \begin{Lem}\label{lem25}
 If $|G| \equiv  3 \pmod 6$, $G' \cong  C_5 \times C_5$ and $|G' \cap Z(G)| = 1$, then 
 \begin{enumerate}
\item  $|\Cl_G (x)| = 3$ for all $x \in G' -Z(G)$,\label{l25a}
\item $|\frac{G}{C_G (G')}|=3$.\label{l25b}
\end{enumerate}
\end{Lem}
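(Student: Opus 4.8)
The plan is to first pin down $\left|\frac{G}{C_G(G')}\right|$ (which is part \ref{l25b}), and then deduce part \ref{l25a} from it. First I would record the numerical facts: $|G|\equiv 3\pmod 6$ forces $|G|$ to be odd and divisible by $3$, so $3$ is the smallest prime divisor of $|G|$, and $5$ divides $|G|$ since $|G'|=25$. As $G'\cong C_5\times C_5$ is abelian we have $G'\subseteq C_G(G')$, so by Lemma \ref{lemcent}\ref{lca} the group $\frac{G}{C_G(G')}$ embeds in $\Aut(G')\cong GL_2(\mathbb{F}_5)$, of order $480=2^5\cdot 3\cdot 5$. Since $|G|$ is odd, $\left|\frac{G}{C_G(G')}\right|$ divides $\gcd(480,|G|)$, hence divides $15$.

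The heart of the argument is to show that this order equals $3$. It is not $1$, since $G'\cap Z(G)=\{1\}\ne G'$ means $G'\nsubseteq Z(G)$, so it remains to exclude the prime divisor $5$. Assume $5\mid\left|\frac{G}{C_G(G')}\right|$, so this order is $5$ or $15$; in both cases Sylow's theorem gives a \emph{normal} Sylow $5$-subgroup $P/C_G(G')\cong C_5$ of $\frac{G}{C_G(G')}$, whence $P\trianglelefteq G$. Viewing $G'$ as an $\mathbb{F}_5$-space, $P/C_G(G')$ acts on it faithfully; a nontrivial $5$-group fixes a nonzero vector of any nonzero $\mathbb{F}_5$-module it acts on, and faithfulness forbids it from fixing everything, so the subspace $L$ of $P$-fixed points of $G'$ has order $5$. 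Normality of $P$ in $G$ makes $L$ invariant under conjugation by $G$, so $G$ acts on $L\cong C_5$ via a homomorphism $G\to\Aut(C_5)\cong C_4$; this homomorphism is trivial because $|G|$ is odd, so $L\subseteq Z(G)$ --- contradicting $\{1\}\ne L\subseteq G'$ and $G'\cap Z(G)=\{1\}$. Hence $\left|\frac{G}{C_G(G')}\right|$ divides $3$ and, being nontrivial, equals $3$, which is \ref{l25b}.

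Part \ref{l25a} then follows at once: for $x\in G'-Z(G)=G'-\{1\}$ we have $C_G(G')\subseteq C_G(x)$, so by \eqref{eqcgx} the index $|\Cl_G(x)|=|G:C_G(x)|$ divides $\left|\frac{G}{C_G(G')}\right|=3$, while \eqref{eqclx} gives $|\Cl_G(x)|\ge 3$ since $x\notin Z(G)$ and $3$ is the smallest prime divisor of $|G|$; therefore $|\Cl_G(x)|=3$. The only step that needs real care is the exclusion of $5$ from $\left|\frac{G}{C_G(G')}\right|$: the decisive observation is that the bound $\left|\frac{G}{C_G(G')}\right|\mid 15$ automatically makes the Sylow $5$-subgroup normal, which promotes the (automatic) line of fixed points of a $5$-element into a genuine $G$-submodule of $G'$, after which the oddness of $|G|$ against $|\Aut(C_5)|=4$ forces that line into the centre.
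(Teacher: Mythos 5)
Your proof is correct, but it inverts the paper's logical order and rests on a genuinely different key idea. The paper proves \ref{l25a} first, purely combinatorially: since $|G|$ is odd, no non-identity element is conjugate to its inverse, and by Lemma \ref{lemcent}\ref{lca} each $|\Cl_G(x)|$ for $x\in G'$ divides $\gcd(|\Aut(C_5\times C_5)|,|G|)$, hence divides $15$; so every class in the $24$-element set $G'-Z(G)$ has size $3$ or $5$, and the four distinct, equal-sized classes $\Cl_G(x),\Cl_G(x^2),\Cl_G(x^3),\Cl_G(x^4)$ force all sizes to be $3$ (no partition of $24$ into such quadruples can involve $5$). Part \ref{l25b} is then deduced by showing that all the centralizers $C_G(x)$, $x\in G'-Z(G)$, coincide: two distinct ones would make $9$ divide $|\frac{G}{C_G(G')}|$, which can only be $3$, $5$ or $15$. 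You instead prove \ref{l25b} directly by a module-theoretic argument: a Sylow $5$-subgroup of $\frac{G}{C_G(G')}$ is automatically normal, its line of fixed points in $G'\cong \mathbb{F}_5^{\,2}$ is then a $G$-submodule, and the oddness of $|G|$ against $|\Aut(C_5)|=4$ forces that line into $Z(G)$, contradicting $G'\cap Z(G)=\{1\}$; after that, \ref{l25a} is a one-line corollary since $C_G(G')\subseteq C_G(x)$ for $x\in G'$. Both routes are sound; yours is more conceptual and dispenses with the class-counting entirely, while the paper's stays within elementary arithmetic of conjugacy class sizes and needs \ref{l25a} as an ingredient for \ref{l25b}.
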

\begin{proof}
 Note that no non-identity element of $G$ is conjugate to its inverse, and also that $G'-Z(G)$ is a union of  conjugacy classes of $G$. 
 
 \ref{l25a} Let $x \in G' -Z(G)$. Since $|G' -Z(G)|=24$, we have $|\Cl_G (x)| \leq 12$. This, in view of Lemma \ref{lemcent}\ref{lca}, implies that $|\Cl_G (x)| = 3$ or $5$.  Since $x^5 =1$,   the classes $\Cl_G (x)$, $\Cl_G (x^2)$, $\Cl_G (x^3)$ and  $\Cl_G (x^4)$ are   distinct but have the same size. Now,   looking at the   partitions of 24 with 3 and 5 as summands, we get  the result.
 
 \ref{l25b} Suppose that there exist   $x,y \in G' -Z(G)$ such that $C_G (x) \neq C_G (y)$. By part \ref{l25a}, we have $|G:C_G (x)|=|G:C_G (y)|=3$. Therefore,
 \[
 \left|\frac{C_G (x)}{C_G (x) \cap C_G (y)}\right| = \left|\frac{C_G (x)C_G (y)}{C_G (y)}\right| = \left|\frac{G}{C_G (y)}\right| =3.
 \]
\noindent This, considering the series $C_G (G') \subseteq C_G (x) \cap C_G (y) \subseteq  C_G (x) \subseteq  G$, implies that $9$ divides $|\frac{G}{C_G (G')}|$. But, in view of Lemma \ref{lemcent}\ref{lca},   $|\frac{G}{C_G (G')}|=$   $3$, $5$, or $15$. Hence, we must have $C_G (x) = C_G (y)$  for all $x, y \in G' -Z(G)$, and so, the result follows.
\end{proof}

We are now in a position to characterize all finite groups of odd order with commutativity degree at least $\frac{11}{75}$.  
 
\begin{Thm}\label{char}
If $|G|$ is odd and $\Pr(G) \geq \frac{11}{75} $, then the possible values of  $\Pr(G)$ and the corresponding structures of $G'$, $G'\cap Z(G)$ and $G/Z(G)$ are  given as follows:
 
\tabcolsep=7pt
\begin{tabular}{cccc}
\hline   $\Pr(G)$ & $G'$  & $G'\cap Z(G)$  & $G/Z(G)$ \\ 
\hline $1$ & $\{1 \}$ & $\{1 \}$ & $ \{1 \}$\\  
\hline    $(1+ {2}/{3^{2s}})/3$    &  $C_3$ & $ C_3 $ & $ (C_3 \times C_3)^s, \; s \geq 1 $\\   
\hline     $(1+ {4}/{5^{2s}})/5 $    & $C_5$  & $  C_5$ & $ (C_5 \times C_5)^s, \; s \geq 1  $\\    
\hline      ${5}/{21} $	 & $C_7$       & $ \{1 \} $ & $ C_7  \rtimes C_3 $\\    
\hline      ${55}/{343} $	 & $C_7$       & $ C_7 $ & $ C_7 \times C_7$\\   
\hline       ${17}/{81} $	 & $C_9$ or $C_3 \times C_3$     & $ C_3  $ &    $(C_3 \times C_3)  \rtimes C_3 $\\   \\
&$C_3 \times C_3 $ & $ C_3 \times C_3$ & $C_3 ^3$\\   
\hline     ${121}/{729} $	 & $C_3 \times C_3$       & $ C_3 \times C_3 $ &    $ C_3 ^4 $\\   
\hline      ${7}/{39} $ 	 & $C_{13}$       &$ \{1 \} $  &   $ C_{13} \rtimes C_3 $ \\   
\hline      ${3}/{19} $	 &  $C_{19}$      & $ \{1 \} $ &   $C_{19} \rtimes C_3$ \\   
\hline      ${29}/{189} $ 	 & $C_{21}$       & $ C_3 $ &   $C_3\times ( C_7 \rtimes C_3)  $ \\   
\hline      ${11}/{75} $	 & $C_5 \times C_5$       & $ \{1 \} $ &    $ (C_5 \times C_5)  \rtimes C_3$ \\   
\hline
\end{tabular}
\end{Thm}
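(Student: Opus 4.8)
The plan is to carry out a case analysis organized by the size of $G'$ and by how $G'\cap Z(G)$ sits inside $G'$. If $G$ is abelian we are in the first row, so assume $G$ non-abelian. Since $|G|$ is odd, its smallest prime divisor $p$ satisfies $p\ge 3$, so Lemma~\ref{prgp} gives $\tfrac{11}{75}\le\Pr(G)\le\tfrac{1}{|G'|}\bigl(1+\tfrac{|G'|-1}{9}\bigr)$, and solving this forces $|G'|\le 25$; thus $|G'|\in\{3,5,7,9,11,13,15,17,19,21,23,25\}$. Since $G'\cap Z(G)\trianglelefteq G$ lies inside $G'$, I would then split into three mutually exclusive regimes: (I) $G'\subseteq Z(G)$; (II) $G'\cap Z(G)=\{1\}$; (III) $\{1\}\subsetneq G'\cap Z(G)\subsetneq G'$, the last possible only when $|G'|$ is composite.

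In regime (I) the group $G$ is nilpotent of class $2$, hence the direct product of its Sylow subgroups, so by \eqref{eqprod} it suffices to treat $p$-groups; in particular the composite non-prime-power values $|G'|\in\{15,21\}$ are discarded, since the product formula then forces $\Pr(G)<\tfrac{11}{75}$ (e.g.\ $\Pr(G)\le\tfrac{11}{27}\cdot\tfrac{55}{343}$ when $|G'|=21$). For each prime power $|G'|\in\{3,5,7,9,25\}$ I would feed Rusin's formula (Lemma~\ref{prgnil}) with the admissible data, list the finitely many resulting values of $\Pr(G)$ together with the corresponding $G/Z(G)$, and keep those that are $\ge\tfrac{11}{75}$; this produces the rows $G'\cong C_3$ and $C_5$ (every $s\ge1$), $C_7\times C_7$ (only $s=1$) and $C_3\times C_3$ central with $G/Z(G)\cong C_3^3$ or $C_3^4$, while $C_9$ central and the nilpotent $5$-groups with $|G'|=25$ all fall below $\tfrac{11}{75}$.

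In regime (II), I first note that $|G|$ odd makes Lemma~\ref{notnil} applicable to $G'\cong C_9$, $C_{25}$ and $C_3\times C_3$ (and to $C_5\times C_5$ when $3\nmid|G|$), forcing $|G'\cap Z(G)|=p$ in those cases; hence the only survivors with $G'\cap Z(G)=\{1\}$ have $|G'|$ prime, or have $G'\cong C_5\times C_5$ and $3\mid|G|$. For $|G'|=p$ prime, Lemma~\ref{prgrus} gives $\Pr(G)=\tfrac{n^2+p-1}{pn^2}$ with $n>1$ an odd divisor of $p-1$ and $G/Z(G)$ non-abelian of order $pn$; running over the admissible $(p,n)$ leaves exactly $C_7$ ($\Pr=\tfrac5{21}$), $C_{13}$ ($\tfrac7{39}$) and $C_{19}$ ($\tfrac3{19}$), and $G/Z(G)$ is then identified by Remark~\ref{semidir}. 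For $G'\cong C_5\times C_5$ with $3\mid|G|$ (so $|G|\equiv 3\pmod 6$), Lemma~\ref{lem25} gives $|G/C_G(G')|=3$, and since $(C_G(G'))'\subseteq G'\cap Z(G)=\{1\}$ the subgroup $C_G(G')$ is abelian of index $3$; the ``moreover'' clause of Lemma~\ref{lemZ} then yields $\Pr(G)=\tfrac19+\tfrac{8}{9\cdot 25}=\tfrac{11}{75}$ and $|G/Z(G)|=75$, so $G/Z(G)\cong(C_5\times C_5)\rtimes C_3$ by Remark~\ref{semidir}.

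Finally, in regime (III) only $|G'|\in\{9,21\}$ remain: $|G'|=15$ is excluded because Lemma~\ref{lem15-21}\ref{1521a} makes such a $G'$ central, and $|G'|=25$ with $|G'\cap Z(G)|=5$ is excluded by Theorem~\ref{main} (which gives $\Pr(G)\le\tfrac{49}{625}<\tfrac{11}{75}$). For $|G'|=9$ and $|G'\cap Z(G)|=3$, Theorem~\ref{main} with $p=3$ leaves only the ``$C_G(G')$ abelian'' branch, giving $\Pr(G)=\tfrac{17}{81}$, $|G/Z(G)|=27$, hence $G/Z(G)\cong(C_3\times C_3)\rtimes C_3$ and $G'\cong C_9$ or $C_3\times C_3$. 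For $|G'|=21$, Lemma~\ref{lem15-21}\ref{1521b} forces $|G/C_G(G')|=|G'\cap Z(G)|=3$ and describes the conjugacy-class sizes outside $C_G(G')$; feeding this into Lemma~\ref{lemZ}, after checking that $C_G(G')$ must be abelian (the non-abelian alternative giving $\Pr(G)<\tfrac{11}{75}$), gives $\Pr(G)=\tfrac{29}{189}$ and $|G/Z(G)|=63$, so $G/Z(G)\cong C_3\times(C_7\rtimes C_3)$ by Remark~\ref{semidir}. The step I expect to be most delicate is the boundary case $G'\cong C_5\times C_5$ with $G'\cap Z(G)=\{1\}$, which realizes the value $\tfrac{11}{75}$ exactly and hence leaves no slack; the substance there is in Lemma~\ref{lem25} (deducing $3\mid|G|$ and $|G/C_G(G')|=3$), and in the bookkeeping needed to verify that every excluded configuration lies strictly below $\tfrac{11}{75}$.
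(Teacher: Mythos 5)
Your proposal reproduces the paper's proof in all essentials: the bound $|G'|\le 25$ from Lemma \ref{prgp}, the same trichotomy on the position of $G'\cap Z(G)$ inside $G'$, and the same tools in each branch (Lemma \ref{prgnil} with \eqref{eqprod} when $G'$ is central; Lemmas \ref{notnil}, \ref{prgrus}, \ref{lem25} and the second part of Lemma \ref{lemZ} when $G'\cap Z(G)=\{1\}$; Theorem \ref{main} together with Lemmas \ref{lem15-21}, \ref{prcg}, \ref{lemZ} in the mixed case; Remark \ref{semidir} to name $G/Z(G)$ at the end).

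There are, however, two places where the sketch falls short. The first is a genuine gap: Lemma \ref{prgp} only bounds the \emph{order} of $G'$, so your candidate list is a list of orders, and the non-abelian group of order $21$ (the Frobenius group $C_7\rtimes C_3$) survives as a possible isomorphism type of $G'$. Everything you later do for $|G'|=21$ rests on Lemma \ref{lem15-21}\ref{1521b}, which is stated only for $G'\cong C_{21}$, so this possibility is never eliminated in your regimes (II) and (III). The paper disposes of it at the outset by citing \cite[lemma 7]{dM93}; some such input, or a replacement argument, is required before the case analysis is complete. The second point is smaller: for $C_3\times C_3\cong G'\subseteq Z(G)$, the value $\Pr(G)=17/81$ arises from $m_1=\cdots=m_4=1$, and it takes an extra argument to rule out $|G/Z(G)|=81$ in that situation and conclude $|G/Z(G)|=27$ --- the paper uses the equality case of Lemma \ref{prgp}, \cite[theorem 12.11]{iM94} and the second part of Lemma \ref{lemZ}. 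Your sketch simply asserts the correspondence between the two values of $\Pr(G)$ and $C_3^3$, $C_3^4$; as Remark \ref{lastrem} points out, Rusin erred on precisely this correspondence in the $p=2$ case, so it is not automatic and deserves an explicit justification.
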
 

\begin{proof}
It is enough to assume that $G$ is non-abelian.  By Lemma \ref{prgp}, we have $|G'| \leq 25$. Also, by \cite[lemma 7]{dM93},  $G'$  is not isomorphic to the unique non-abelian group of order $21$. Therefore, it follows that   
 $G'$ is isomorphic to   $C_3 \times C_3$,  $C_9$,  $C_{15}$,  $C_{21}$,  $C_{5} \times C_{5}$,  $C_{25}$   \textrm{ or } $C_p$,
 where $p$ is an odd prime with $p \leq 23$.

\noindent \textit{Case} $1$.  \quad $G' \subseteq Z(G)$.

If $G'  \cong  C_p$,   then, by Lemma \ref{prgnil}, there exists a  positive integer $s$ such that $\frac{G}{Z(G)} \cong (C_p \times C_p)^s$ and
\[
 \Pr(G) = \frac{1}{p}\left(1 + \frac{p-1}{p^{2s}}\right).
 \]
It follows that $s$ can have infinitely many values for $p =3$ and $5$, whereas $s=1$ is the only possibility if $p=7$. For the rest of the values of $p$, we have $\Pr(G) < \frac{11}{75}$.

If $G'  \cong C_3 \times C_3$, then, by Lemma \ref{prgnil},  we have   
\[
\Pr(G) = \frac{1}{9}\left[1 + \frac{2}{3^{2m_1}} + \frac{2}{3^{2m_2}}+ \frac{2}{3^{2m_3}}+ \frac{2}{3^{2m_4}}\right],
\]
with $3^{2m_i} = |\frac{G}{K_i ^*}|$,  $1\leq i \leq 4$, where $K_1$, $K_2$, $K_3$ and $K_4$ are the proper subgroups of $G'$.  A close study of the subgroups $K_1 ^*$, $K_2 ^*$, $K_3 ^*$ and $K_4 ^*$, using Lemma \ref{hstar}, reveals that   $\frac{17}{81}$ (if  $m_1 = m_2 = m_3 = m_4 = 1$)  and  $\frac{121}{729}$ (if   $m_1 = m_2 = 1,m_3 = m_4 = 2$) are the only values of $\Pr(G)$, in the interval [$\frac{11}{75}$, $1$], and on each occasion   $\frac{G}{Z(G)}$ is an elementary abelian $3$-group with  $|\frac{G}{Z(G)}| \leq 81$. In fact, if $\Pr(G) = \frac{121}{729}$, then we have $|\frac{G}{Z(G)}|=81$, and   if $\Pr(G) = \frac{17}{81}$, then equality holds in Lemma \ref{prgp}, and so,  using  \cite[theorem 12.11]{iM94} and the second part of Lemma \ref{lemZ}, we have $|\frac{G}{Z(G)}| = 27$.   

By \eqref{eqprod} and Lemma  \ref{prgnil}, we have $\Pr(G) < \frac{11}{75}$ for the other possibilities of $G'$.

\noindent \textit{Case} $2$.  \quad $G' \cap Z(G) = \{ 1 \}$.

In this case,   by \eqref{eqnilcg}, $C_G(G')$ is an abelian group. Also, in view of Lemma \ref{lemcent}\ref{lca}, Lemma \ref{notnil} and Lemma \ref{lem15-21}, it is not possible to have $|G'|=$ $3$, $5$, $9$, $15$, $17$ or $21$. 

If $|G'|= $ $7$, $11$, $13$, $19$ or $23$,  then   $\Pr(G)$ and  $|G/Z(G)|$ are determined by   Lemma \ref{prgrus}.
More precisely, since there is a unique odd divisor $n>1$ of $p-1$ for each $p \in \{7, 11, 13, 23 \}$, we have $\Pr(G) =\frac{5}{21}$ and $|G/Z(G)| = 21$ if $|G'| = 7 $, and $\Pr(G) =\frac{7}{39}$ and $|G/Z(G)| = 39$ if $|G'| = 13 $, while   $\Pr(G) < \frac{11}{75}$    if $|G'| = 11$ or $23$.    On the other hand, if $p= 19$, then there are two such odd divisors ($n=3$ and $n=9$) of  $p-1$. It can be seen that if $n=3$, then $\Pr(G) =\frac{3}{19}$ and $|G/Z(G)| = 57$, whereas $\Pr(G) < \frac{11}{75}$ if $n=9$.

If $|G'|= 25$, then, in view of Lemma \ref{notnil}, we must have $G' \cong  C_5 \times C_5$  and $|G| \equiv  3 \pmod 6$.   So, using Lemma \ref{lem25}\ref{l25b} and the second part of Lemma \ref{lemZ}, we have $\Pr(G)=\frac{11}{75}$ and   $|\frac{G}{Z(G)}| = 75$.

\noindent \textit{Case} $3$.  \quad $G' \nsubseteq Z(G)$ and $G' \cap Z(G) \neq \{ 1 \}$.

In this case, $|G'|=$ $9$, $21$ or $25$ ( $|G'| \neq 15$, by Lemma \ref{lem15-21}\ref{1521a}.

If $|G'|=9$, then $|G' \cap Z(G)| = 3$. Using Theorem \ref{main}, we see that $\frac{17}{81}$ is  the only value of $\Pr(G)$, in the interval [$\frac{11}{75}$, $1$], and it  occurs when $C_G (G')$ is abelian. Also, in that case, we have $|\frac{G}{Z(G)}|= 27$.

If $|G'|=21$, then, by Lemma \ref{lem15-21}\ref{1521b}, $|G' \cap Z(G)| = 3$.  Using Lemma \ref{lemZ} together with Lemma \ref{prcg} and Lemma \ref{lem15-21}\ref{1521b}, we see  that $\frac{29}{189}$ is  the only value of $\Pr(G)$, in the interval [$\frac{11}{75}$, $1$], and it occurs   when $C_G (G')$ is abelian. Also, in that case $|\frac{G}{Z(G)}|=63$.  

Finally, if $|G'|=25$, then $|G' \cap Z(G)| = 5$. But, using Theorem \ref{main}, we see that $\Pr(G)< \frac{11}{75}$ .

 Thus, in view of Remark \ref{semidir}, the theorem is completely proved. 
\end{proof}

 We conclude our discussion with the following remark.
\begin{Rem}\label{lastrem}
In \cite[section IV]{dR79},  Rusin  classifies all $G$ with $\Pr(G) > \frac{11}{32}$. However, there are a few of points that are worth noting.
\begin{enumerate}
\item In case 2, he   misses out one situation, where we have $\Pr(G) =\frac{5}{14}$,    $G'  \cong  C_7$, $ G' \cap Z(G) = \{ 1 \}$, and $ \frac{G}{Z(G)}  \cong D_7$,  the dihedral group of order $14$.

\item  In case 3, he claims to have shown that if $|G'|= 4$ and $|G' \cap Z(G)| = 2$, then 
$
 \Pr(G)= {1}/{4} \cdot \left(1 +  {1}/{2^{2t}} +  {1}/{2} \cdot  {1}/{2^{2s}}\right) 
$ 
with $2^{2s}=[C_G(G'): Z(C_G(G'))]$,  $2^{2t} = |\frac{G}{G' \cap Z(G)}  : Z(\frac{G}{G' \cap Z(G)})|$  and  $s+1\geq$ $ t\geq 1$. But,  putting $p=2$ in Theorem \ref{main},  we see that $t=1$ is the only possibility.

\item In the summary,   he writes that if  $C_2 \times C_2$ $ \cong  G' \subseteq Z(G)$, then $\frac{G}{Z(G)} \cong  C_2 ^3$ or $C_2 ^4$, and $\Pr(G) = \frac{7}{16}$ or $\frac{25}{64}$.  However, arguing in the same manner as we have done in a similar situation (namely, $C_3 \times C_3 \cong  G' \subseteq Z(G)$) in the proof of Theorem \ref{char}, we see  that $\frac{G}{Z(G)} \cong  C_2 ^3$ or $C_2 ^4$ according as $\Pr(G) = \frac{7}{16}$ or $\frac{25}{64}$.  \  Rusin also writes that if $\Pr(G) = \frac{3}{8}$,  $G' \cong C_6$  and $G' \cap Z(G) \cong C_2$, then $\frac{G}{Z(G)} \cong C_2 \times S_3$  or $T$  , where $T$ is the non-abelian group of order $12$ besides $A_4$ and $C_2 \times S_3$.  But, it is well-known that $T \not\cong \frac{G}{Z(G)}$ for any $G$.\label{lremd}
\end{enumerate}
\end{Rem}


\begin{thebibliography}{6}

\bibitem{bM06}
F. Barry,  D.  MacHale,  and $\mathrm{\acute A}$. N$\mathrm{\acute i}$ Sh$\mathrm{\acute e}$, 2006  Some supersolvability conditions for finite groups.  \textit{Mathematical Proceedings of the Royal Irish  Academy\/} \textbf{106A},   163--177.  
 
\bibitem{bZ98}
Ya. G. Berkovich, and E. M.  Zhmud$^{\prime}$,  1998    \textit{Characters of Finite Groups. Part 1}. Translations of Mathematical Monographs \textbf{172}, American Mathematical Society, Providence, RI.
  
\bibitem{wG73}
W. H. Gustafson, 1973    What is the probability that two group elements commute?  \textit{American Mathematical Monthly\/} \textbf{80}, 1031--1034.

\bibitem{iM94}
I. M. Isaacs,  1994  \textit{Character theory of finite groups}. Dover Publications. Inc. New YorK. 

\bibitem{kJ69}
K. S.  Joseph, 1969  Commutativity in non-abelian groups. Unpublished Ph. D. thesis. University of California. Los Angeles.
 
\bibitem{dM93}
D. Machale, and  P. $\acute{O}$ Murch$\acute{u}$,  1993  Commutator subgroups of groups with small central factor groups. \textit{Mathematical Proceedings of the Royal Irish  Academy\/}  \textbf{93A}, 123--129.
 
\bibitem{jR84}
J. J. Rotman, 1984  \textit{An introduction to the theory of groups\/} (3rd edn). Allyn and Bacon. Inc.

\bibitem{dR79}
D. J. Rusin, 1979    What is the probability that two elements of a finite group commute?  \textit{Pacific Journal  Mathmatics\/} \textbf{82}, 237--247.

\bibitem{gap08}
The GAP~Group, 2008 GAP -- Groups, Algorithms, and Programming. Version 4.4.12.   http://www.gap-system.org  

 
 

\end{thebibliography}
\end{document}